\numberwithin{equation}{section}
\numberwithin{figure}{section}
\theoremstyle{plain}
\newtheorem{thm}{Theorem}
\newtheorem{lem}[thm]{Lemma}
\newtheorem{prop}[thm]{Proposition}
\newtheorem{cor}[thm]{Corollary}
\theoremstyle{definition}
\newtheorem*{defn*}{Definition}
\newtheorem*{example*}{Example}
\newcommand{\bigslant}[2]{{\raisebox{.2em}{$#1$}\left/\raisebox{-.2em}{$#2$}\right.}}
\newcommand{\orcid}[1]{\href{https://orcid.org/#1}{\includegraphics[width=10pt]{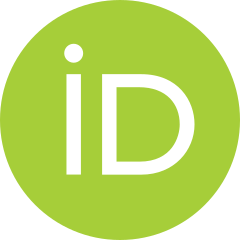}  orcid.org/#1}}
\title{\vspace{-0.5cm}\LARGE \textsc{\scalebox{0.92}[1.0]{Foundations of Structural Statistics:}\\\scalebox{0.92}[1.0]{Statistical Manifolds}}\vspace{-0.5cm}}
\author{\large P. Michl \orcid{0000-0002-6398-0654}}
\date{}
\begin{document}
\twocolumn[ 
\begin{@twocolumnfalse}  

\maketitle 

\begin{abstract}
\vspace{-1.9cm}
\begin{adjustwidth}{10mm}{10mm}
Upon a consistent topological statistical theory the application of structural statistics requires a quantification of the proximity structure of model spaces. An important tool to study these structures are Pseudo-Riemannian metrices, which in the category of statistical models are induced by statistical divergences. The present article  extends the notation of topological statistical models by a differential structure to statistical manifolds and introduces the differential geometric foundations to study  distribution families by their differential-, Riemannian- and symplectic geometry.
\\\\
\textbf {Keywords:} Statistical Manifold, Information Geometry
\end{adjustwidth}
\vspace{0.5cm}
\end{abstract}
\end{@twocolumnfalse}
] 

\section{Introduction}

Since the 1960s investigations on the invariant structures of statistical models led to various approaches to incorporate geometric structures, of which notably the contributions of \textsc{N. N. Chenzow} \cite{Chenzow1965} and \textsc{S. Amari} \cite{Amari1987} eventually encouraged the notation of \emph{Statistical Manifolds}. Thereby Amari's approach to incorporate a  differential structure emphasizes the \emph{Fisher Information Metric} which is obtained as a partial derivative of the Kullback-Leibler divergence. The induced geometry became known as \emph{Information Geometry}. Unfortunately the supplementary degree of abstraction compared to its initially low applicability caused further research on this direction to lose ``momentum''. Nevertheless, in the first decade of the $21^{\text{st}}$ century, the increasing availability of large collections of complex natural data, demanded a theoretic underpinning of complex structural assumptions, in particular with respect to the rising theory of deep-learning.

\section{\label{sec:Statistical-manifolds}Statistical Manifolds}

\emph{Topological Statistical Models} \cite{Michl2019} provide the ability to characterize statistical inference without the necessity of an underlying sample space. Thereby the statistical equivalence of statistical models is provided by Kolmogorov equivalence. The topologies of those Kolmogorov quotients in turn are obtained by countable coverings of Borel sets in $\mathbb{R}$ and therefore enriches the underlying model space to be second countable Hausdorff spaces. It is therefore straightforward to transfer the concept of topological manifolds to statistical models by the Kolmogorov quotients of their induced topological statistical models: Let $(S,\,\Sigma,\,\mathcal{M},\,\mathcal{T})$ be topological statistical model and $n\in\mathbb{N}$. Then a \emph{coordinate chart} $(U,\,\phi)$ within $\mathrm{KQ}(\mathcal{M},\,\mathcal{T})$ is constituted by an open set $U\in\bigslant{\tau}{\mathrm{id}}$ and a homeomorphism $\phi:U\rightarrow\mathbb{R}^{n}$ into $\mathbb{R}^{n}$. This allows the definition of an \emph{atlas} $\mathcal{A}$ for $\mathcal{M}$ by a family of charts $\{(U_{i},\,\phi_{i})\}_{i}$, that covers $\bigslant{\mathcal{M}}{\mathrm{id}}$, such that $\bigslant{\mathcal{M}}{\mathrm{id}}=\bigcup_{i\in I}U_{i}$. In order to extend the local Euclidean structure of the individual coordinate charts, to a global structure over the model space, the transitions within overlapping charts are required to preserve the structure. Let therefore $(U_{a},\,\phi_{a})$ and $(U_{b},\,\phi_{b})$ be coordinate charts in $\mathcal{A}$ with a nonempty intersection $U_{a\cap b}=U_{a}\cap U_{b}$, then $\phi_{a}(U_{a\cap b})$ and $\phi_{b}(U_{a\cap b})$ generally denote different representations of $U_{a\cap b}$ in $\mathbb{R}^{n}$. In this case for a given $k\in\mathbb{N}_{0}\cup\{\infty,\,\omega\}$, the charts are regarded to be $C^{k}$-compatible, iff their \emph{transition maps} $\phi_{a} \circ\phi_{b}^{-1}$ and $\phi_{b}\circ\phi_{a}^{-1}$ are $k$-times continuously differentiable.

If all charts of an atlas $\mathcal{A}$ are pairwise $C^{k}$-compatible, then $\mathcal{A}$ is termed a $C^{k}$-atlas. Let then be $\mathcal{A}^{'}$ a further $C^{k}$-atlas of $\mathcal{M}$, then $\mathcal{A}$ and $\mathcal{A}^{'}$ are termed $C^{k}$-equivalent, if also $\mathcal{A}\cup\mathcal{A}^{'}$ is a $C^{k}$-atlas of  $\mathcal{M}$. This equivalence relationship may be used to derive a maximal atlas by completion. Let therefore $\mathcal{A}_{\max}$ be the union of all $C^{k}$-atlases of $\mathcal{M}$, that are $C^{k}$-equivalent to $\mathcal{A}$, then $\mathcal{A}_{\max}$ is unique for the $C^{k}$-equivalence class of $\mathcal{A}$ and does not depend on the choice of $\mathcal{A}$ within this class. Then any $C^{k}$-differentiable function, that is defined within the image of a chart in $\mathcal{A}_{\max}$ has a unique $C^{k}$-differentiable extension within its neighbourhood in $\mathrm{KQ}(\mathcal{M},\,\mathcal{T})$.
The crux in the definition of a $C^{k}$-atlas $\mathcal{A}$ however
is, that due to the Hausdorff property of $\mathrm{KQ}(\mathcal{M},\,\mathcal{T})$
and the completion of $\mathcal{A}$ by$\mathcal{A}_{\max}$ the requirement of the transition functions to be $C^{k}$-diffeomorphism in $\mathbb{R}^{n}$ induces a differential structure to $\mathrm{KQ}(\mathcal{M},\,\mathcal{T})$. Then not only the transition maps, but any coordinate chart by itself may be regarded as a $C^{k}$-diffeomorphism into $\mathbb{R}^{n}$. This defines the structure of a \emph{statistical manifold}.
\begin{defn*}[Statistical manifold]
\label{def:Statistical-manifold} \emph{Let $(S,\,\Sigma,\,\mathcal{M},\,\mathcal{T})$
be a topological statistical model and $\mathcal{A}$ an $n$-dimensional
$C^{k}$-atlas for $\mathrm{KQ}(\mathcal{M},\,\mathcal{T})$. Then
the tuple $(S,\,\Sigma,\,\mathcal{M},\,\mathcal{A})$ is termed a
statistical manifold. }\textbf{Remark}:\emph{ The category of $k$-differentiable
statistical manifolds is denoted by $\mathbf{StatMan}^{k}$.}
\end{defn*}
Since the atlas $\mathcal{A}$ has to be defined with regard to the
Kolmogorov quotient $\mathrm{KQ}(\mathcal{M},\,\mathcal{T})$ to assure
the Hausdorff property, statistical manifolds have technically to
be regarded as non-Hausdorff manifolds. Since the atlas $\mathcal{A}$
conversely induces a topology that equals $\bigslant{\mathcal{T}}{\mathrm{id}}$
the original topological statistical model $(S,\,\Sigma,\,\mathcal{M},\,\mathcal{T})$
may not be derived by \emph{$(S,\,\Sigma,\,\mathcal{M},\,\mathcal{A})$}.
Nevertheless by the extension of the Kolmogorov quotient to the atlas $\mathrm{KQ}(\mathcal{M},\,\mathcal{A})$, it follows that $\mathrm{KQ}(\mathcal{M},\,\mathcal{T})$ and $\mathrm{KQ}(\mathcal{M},\,\mathcal{A})$ are Kolmogorov equivalent and therefore that $(S,\,\Sigma,\,\mathcal{M},\,\mathcal{T})$ and $(S,\,\Sigma,\,\mathcal{M},\,\mathcal{A})$ are induced by statistical equivalent models. With regard to observation based statistical inference this ``irregularity'' however usually has no impact, since for any \emph{identifiable statistical manifold} $(S,\,\Sigma,\,\mathcal{M},\,\mathcal{A})$, which model space $\mathcal{M}$ is identical to a parametric family $\mathcal{M}_{\theta}$, it holds that $\bigslant{\mathcal{M}}{\mathrm{id}}=\mathcal{M}_{\theta}=\mathcal{M}$ and therefore that $\mathrm{KQ}(\mathcal{M},\,\mathcal{A})=(\mathcal{M},\,\mathcal{A})$.

Without loss of generality in the following therefore $(S,\,\Sigma,\,\mathcal{M},\,\mathcal{A})$
is assumed to be an identifiable statistical manifold and therefore
a manifold in the usual context. Nevertheless, in order to provide
higher structures, it is reasonable to recapitulate the usual concepts
and the vocabulary of manifolds. First of all by assuming $\mathcal{A}$
to be a $C^{0}$-atlas, the transition maps, that define the structure
of $(S,\,\Sigma,\,\mathcal{M},\,\mathcal{A})$ are only required to
be continuous and $(S,\,\Sigma,\,\mathcal{M},\,\mathcal{A})$ is termed
\emph{topological}. For the case, that $\mathcal{A}$ is a $C^{k}$-atlas
with $k>0$ however, the transition functions at least have to be
differentiable and therefore $(S,\,\Sigma,\,\mathcal{M},\,\mathcal{A})$
is termed \emph{differentiable}. Let now be $(\mathcal{N},\,\mathcal{B})$
a further identifiable statistical manifold, where $\mathcal{B}$
is an $m$-dimensional $C^{k}$-atlas of $\mathcal{N}$ and $f\colon\mathcal{M}\to\mathcal{N}$
a function, that is is continuous w.r.t. the induced topologies. Then
$f$ is \emph{$C^{k}$-differentiable} and written as $f\in C^{k}(\mathcal{M},\,\mathcal{N})$,
if for arbitrary coordinate charts $(U,\,\phi)\in\mathcal{A}$ and
$(V,\,\nu)\in\mathcal{B}$ with $f(U)\subseteq V$ it holds, that
$\nu\circ f\circ\phi^{-1}$ is $k$-times continuously differentiable.
Thereby the case $(\mathcal{N},\,\mathcal{B})=(\mathbb{R},\,\mathcal{B}(\mathbb{R}))$
occupies an exceptional position, which is abbreviated by the notation
$C^{k}(\mathcal{M})$. At this point it is important to notice, that
although the definitions of $C^{k}$-atlases and $C^{k}$-differentiable
functions depend on the choice of $k$, this does essentially not
apply for the underlying differentiable structures. The reason for
this ``peculiarity'' may be found in the property, that for any
$k>0$ any $C^{k}$-atlas uniquely admits a ``smoothing'', given
by a $C^{k}$-equivalent $C^{\infty}$-atlas. Therefore the set of
smooth functions $C^{\infty}(\mathcal{M})$ is well defined, independent
of the underlying differentiable structure. Thereby $C^{\infty}(\mathcal{M})$
constitutes an associative algebra w.r.t. the pointwise product ``$\cdot$'',
the addition ``$+$'' and the scalar multiplication. This allows
a formal definition of \emph{derivations} at points $P\in\mathcal{M}$
by $\mathbb{R}$-linear functions $D\colon C^{\infty}(\mathcal{M})\to\mathbb{R}$,
that satisfies the Leibniz rule $D(\varphi\cdot\psi)=D(\varphi)\psi(P)+\varphi(P)D(\psi)$
for all $\varphi,\,\psi\in C^{\infty}(\mathcal{M})$. Let now be $T_{P}\mathcal{M}$
the set of all derivations at $P$, then $T_{P}\mathcal{M}$ defines
an $n$-dimensional $\mathbb{R}$-vector space, by the operations
$(v+w)(\varphi)\coloneqq v(\varphi)+w(\varphi)$ and $(\lambda v)(\varphi)\coloneqq\lambda v(\varphi)$,
for $v,\,w\in T_{P}\mathcal{M}$, $\varphi\in C^{\infty}(\mathcal{M})$
and $\lambda\in\mathbb{R}$. As for any given coordinate chart $(U,\,\phi)$
that contains $P$, any derivation $v\in T_{P}\mathcal{M}$ uniquely
corresponds to a directional derivative in $\mathbb{R}^{n}$ at the
point $\phi(P)$, the elements of $T_{P}\mathcal{M}$ are termed\emph{
tangent vectors} and $T_{P}\mathcal{M}$ the \emph{tangent space}
at $P$. Then the\emph{ partial derivatives }at $P$, given by $\{\partial_{i}\}_{P}$
with $\partial_{i}\colon P\mapsto\partial/\partial\phi^{i}\mid_{P}$
provide a basis of $T_{P}\mathcal{M}$, such that any $v\in T_{P}\mathcal{M}$
has a local \emph{representation} by a vector $\boldsymbol{\xi}\in\mathbb{R}^{n}$
with $v=\xi^{i}\partial_{i}$. Let now be $f\in C^{\infty}(\mathcal{M},\,\mathcal{N})$,
then the \emph{differential} of $f$ at $P\in\mathcal{M}$ is a linear
mapping $\mathrm{d}f_{P}\colon T_{P}\mathcal{M}\to T_{f(P)}\mathcal{N}$,
which for all $v\in T_{P}\mathcal{M}$ and $\varphi\in C^{\infty}(\mathcal{N})$
is defined by $(\mathrm{d}f_{P}v)(\varphi)\coloneqq v(\varphi\circ f)$.
Then $f$ is an \emph{immersion}, if for all $P\in\mathcal{M}$ the
differential $\mathrm{d}f_{P}$ is injective. If furthermore $f$
is injective and continuous w.r.t. to the respectively induced topologies,
then $f$ is a \emph{smooth embedding} and the image of $f$ a \emph{smooth
submanifold} of $\mathcal{N}$ w.r.t. the atlas, which is restricted
to the image. This allows the definition of \emph{smooth parametrisation}s
for statistical manifolds. 
\begin{defn*}[Smooth parametrisation]
\label{def:Differentiable-parametrisation} \emph{Let $(\mathcal{M},\,\mathcal{A})$
be a differentiable statistical manifold, $(V,\,\mathcal{B})$ a differentiable
manifold over a vector space $V$ and $\theta$ a parametrisation
for $\mathcal{M}$ over $V$. Then $\theta$ is termed a smooth parametrisation
for $(\mathcal{M},\,\mathcal{A})$, iff $\theta^{-1}\colon\mathrm{KQ}(\mathcal{M},\,\mathcal{A})\hookrightarrow(V,\,\mathcal{B})$
is a smooth embedding.}
\end{defn*}
Since for any smooth $n$-manifold the \emph{Whitney embedding theorem}
postulates the existence of a smooth embedding within $\mathbb{R}^{2n}$
any smooth statistical $n$-manifold \emph{$(\mathcal{M},\,\mathcal{A})$}
has a smooth parametrisation $\theta$ over $\mathbb{R}^{2n}$. It
is therefore convenient to introduce the notation ``$(\mathcal{M}_{\theta},\,\mathcal{A})$''
for a differentiable statistical manifold with a smooth parametrisation
$\theta$. Since the tuple $(\mathcal{M},\,\theta^{-1})$ is an $C^{\infty}$-chart
that covers $\mathcal{M}$, it provides a \emph{smooth representation}
of \emph{$(\mathcal{M},\,\mathcal{A})$} by the parameter space $\Theta=\theta^{-1}(\mathcal{M})$.
Thereby for any coordinate chart $(U,\,\phi)\in\mathcal{A}$, the
mapping $\phi\colon U\to\mathbb{R}^{n}$ provides an $n$-dimensional
basis for the tangent space $T_{P}\mathcal{M}$ by the partial derivatives
$\{\partial_{i}\}_{P}$. Since the smooth parametrisation $\theta$
however is an immersion, also the differentials $\mathrm{d}\theta^{-1}(\partial_{i})$
also provide an $n$-dimensional basis of the space $T_{\theta^{-1}(P)}\Theta$.
Therefore any tangent space may intuitively be identified with an
$n$-dimensional affine subspace of $\Theta$ and any tangent vector
by a directional derivative within this subspace. A smooth parametrisation
then in particular allows the identification of \emph{smooth curves}
$\gamma\colon I\to\mathcal{M}$ in $\mathcal{M}$ by smooth \emph{parametric
curves} $\gamma_{\theta}\colon I\to\Theta$ in $\Theta$, such that
$\gamma=(\theta\circ\gamma_{\theta})$. Then for any $k\in\mathbb{N}_{0}\cup\{\infty,\,\omega\}$,
it holds that $\gamma\in C^{k}(I,\,\mathcal{M})$ iff $\gamma_{\theta}\in C^{k}(I,\,\Theta)$.
Therefore smooth parametric curves provide the foundation for a traversal
of $\mathcal{M}$. Thereby the traversal along the parametric curve
$\gamma_{\theta}(t)$ is described by the unique directional derivatives
$\dot{\gamma}_{\theta}(t)$ in $\mathcal{\Theta}$. Consequentially
due to the unique correspondence between directional derivatives in
$T_{\gamma_{\theta}(t)}\mathcal{\Theta}$ and tangent vectors in $T_{\gamma(t)}\mathcal{M}$
the traversal along $\gamma$ in $\mathcal{M}$ is also described
by unique tangent vectors $\dot{\gamma}(t)\in T_{\gamma(t)}\mathcal{M}$.
This uniqueness however only applies to the direction in $\Theta$
but not to its representation in $T_{\gamma_{\theta}(t)}\mathcal{\Theta}$
in terms of the chosen basis. With regard to a coordinate chart $(U,\,\phi)$
in $\mathcal{M}$ the local basis $\{\partial_{i}\}_{P}$ at $P\in U$
naturally extends over $U$, by regarding $\partial_{i}\colon P\mapsto\partial/\partial\phi^{i}$
as an ordered basis, termed a \emph{local frame}, which localized
as $P$ provides $\partial_{i}\colon P\mapsto\partial/\partial\phi^{i}\mid_{P}$.
Then the differentials $\mathrm{d}\theta^{-1}(\partial_{i})$ provide
a local basis of $T_{\gamma_{\theta}(t)}\Theta\mid_{\theta^{-1}(U)}$
and the directional derivatives $\dot{\gamma}_{\theta}(t)\in T_{\gamma_{\theta}(t)}\Theta\mid_{\theta^{-1}(U)}$
may uniquely be identified with tangent vectors $\dot{\gamma}(t)\in T_{\gamma(t)}\mathcal{M}\mid_{U}$
by $\dot{\gamma}(t)=\mathrm{d}\theta(\dot{\gamma}_{\theta}(t))\mid_{U}$.
In order to continue the traversal however it is required to ``connect''
the basis vectors of the affine spaces along the curve by an unambiguous
notation, which is independent of the chosen coordinate charts. This
provides the notation of an \emph{affine connection}. At a global
scale the disjoint union of all tangent spaces constitutes the \emph{tangent
bundle} $T\mathcal{M}$, which by itself is diffeomorphic to $\mathcal{M}\times\mathbb{R}^{n}$
and therefore in particular a differentiable manifold. This property
allows to define \emph{smooth vector fields} on $\mathcal{M}$ by
smooth functions $X\in C^{\infty}(\mathcal{M},\,T\mathcal{M})$, or
w.r.t. the sequence $\mathcal{M}\stackrel{X}{\hookrightarrow}T\mathcal{M}\twoheadrightarrow\mathcal{M}$
by smooth sections $X\in\Gamma(T\mathcal{M})$. Intuitively smooth
vector fields assign tangent vectors to the points of the manifold,
such that ``small `` movements on the manifold are accompanied by
``small'' changes within the tangent spaces. With regard to a coordinate
chart $(U,\,\phi)$ a local frame $\{\partial_{i}\}$ may also be
regarded as a localized ordered basis of the vector fields $\Gamma(T\mathcal{M}\mid_{U})$.
Therefore the transition of local frames may be described be derivatives
of vector fields, which provides the notation of \emph{covariant derivatives}.
A covariant derivative $\nabla$ on $\mathcal{M}$ formally defines
a mapping $\nabla\colon\Gamma(T\mathcal{M})^{2}\to\Gamma(T\mathcal{M})$
with $(X,\,Y)\mapsto\nabla_{X}Y$, which satisfies: (i) $\nabla$
is $\mathbb{R}$-linear in both arguments, (ii) $\nabla$ is $C^{\infty}(\mathcal{M})$-linear
in the first argument and (iii) $\nabla$ is a derivation in the second
argument, such that $\nabla_{X}(\varphi\cdot Y)=X(\varphi)Y+\varphi\nabla_{X}Y$
for arbitrary $\varphi\in C^{\infty}(\mathcal{M})$ and $X,\,Y\in\Gamma(T\mathcal{M})$.
An affine connection is then completely described by the specification
of a covariant derivative which in turn endows a differentiable manifold
with an additional structure $\nabla$. In particular however the
choice of an affine connection for any curve $\gamma$ completely
determines its derivative $\dot{\gamma}\in\Gamma(T\mathcal{M})\mid_{\gamma}$
along the curve, as well as those vector fields $X\in\Gamma(T\mathcal{M})$
which are covariant constant along $\gamma$, such that $\nabla_{\dot{\gamma}}X=0$.
As this property however may also be applied w.r.t. the derivative
along the curve itself, the choice of an affine connection $\nabla$
in particular determines those curves $\gamma$, which derivative
is covariant constant along their traversal. These curves, known as
\emph{geodesic}s, therefore generalize straight lines to differentiable
manifolds.
\begin{defn*}[Geodesic]
\label{def:Geodesic}\emph{ Let $(\mathcal{M},\,\mathcal{A})$ be
a smooth statistical manifold and $\nabla$ an affine connection on
$\mathrm{KQ}(\mathcal{M},\,\mathcal{A})$. Then a smooth curve $\gamma\colon I\to\mathcal{M}$
is termed a geodesic w.r.t. $\nabla$, iff it satisfies the geodesic
equation: 
\[
\nabla_{\dot{\gamma}}\dot{\gamma}=0
\]
}
\end{defn*}
Over and above geodesic, the choice of an affine connection $\nabla$
admits two fundamental invariants to the differentiable structure
by the \emph{curvature} and the \emph{torsion. }Thereby the curvature
$R\colon\Gamma(T\mathcal{M})^{3}\to\Gamma(T\mathcal{M})$ with $R(X,\,Y)Z\coloneqq\nabla_{X}\nabla_{Y}Z-\nabla_{Y}\nabla_{X}Z-\nabla_{[X,\,Y]}Z$
intuitively provides a description, of how tangent spaces ``roll''
along smooth curves under parallel transport, whereas the torsion
$T\colon\Gamma(T\mathcal{M})^{2}\to\Gamma(T\mathcal{M})$ with $T(X,\,Y)\coloneqq\nabla_{X}Y-\nabla_{Y}X-[X,\,Y]$
describes their ``twist'' about the curve. Notwithstanding these
invariants however, an affine connection $\nabla$ only adjusts local
tangent spaces, but does not provide a notation of ``length'' or
``angle''. The mandatory next step, therefore regards the incorporation
of local geometries within the tangent spaces, that eventually extend
to a global geometry over the differentiable structure.

\section{Pseudo-Riemannian Structure}

Since tangent spaces are vector spaces, it is natural to obtain the
local geometry by an inner product. More generally however it suffices
to provide a\emph{ }mapping $g_{P}\colon T_{P}\mathcal{M}{}^{2}\to\mathbb{R}$
that satisfies (i) $g_{P}$ is $C^{\infty}(\mathcal{M})$-bilinear,
(ii) $g_{P}$ is symmetric and (iii) $g_{P}$ is non-degenerate. In
the purpose to extend the local geometries to a global geometry however,
it has additionally to be claimed that the local geometries only vary
smoothly w.r.t. smooth vector fields. This localization requirement
yields the notation of a \emph{pseudo-Riemannian metric $g$} on $(\mathcal{M},\,\mathcal{A})$,
which endows each point $P\in\mathcal{M}$ with a symmetric non-degenerate
form $g_{P}$, such that the mapping 
\[
g(X,\,Y)\colon P\mapsto g_{P}(X_{P},\,Y_{P})
\]
is smooth, i.e. $g(X,\,Y)\in C^{\infty}(\mathcal{M})$, for arbitrary
$X,\,Y\in\Gamma(T\mathcal{M})$. With regard to a coordinate chart
$(U,\,\phi)$ and a local frame $\{\partial_{i}\}$ the pseudo-Riemannian
metric $g$ has a coordinate representation by \emph{metric coefficients}
$g_{ij}\colon U\to\mathbb{R}$, with $g_{ij}=g(\partial_{i},\,\partial_{j})$
and therefore by a matrix $G_{P}=(g_{ij})$, termed a \emph{fundamental
matrix}. For $P\in U$ and $v,\,w\in T_{P}\mathcal{M}$, with $v=\xi^{i}\partial_{i}$
and $w=\zeta^{i}\partial_{i}$ it then follows, that $g(v,\,w)$ has
a local representation $\langle\boldsymbol{\xi},\,\boldsymbol{\zeta}\rangle_{P}\coloneqq\boldsymbol{\xi}^{T}G_{P}\boldsymbol{\zeta}$.
In the purpose to extend the local geometry to a global geometry an
affine connection $\nabla$ has to be defined, which is compatible
with $g$, such that $Xg(Y,\,Z)=g(\nabla_{X}Y,\,Z)+g(Y,\,\nabla_{X}Z)$,
for all $X,\,Y,\,Z\in\Gamma(T\mathcal{M})$. In this case $\nabla$
is termed a \emph{metric connection} and has a coordinate representation
by \emph{connection coefficients} $\Gamma_{ij}^{k}\colon U\to\mathbb{R}$,
with $\nabla_{\partial_{i}}\partial_{j}=\sum_{k=1}^{n}\Gamma_{ij}^{k}\partial_{k}$,
known as the \emph{Christoffel-symbols. }Then the geodesic equation
over a parametric curve $\gamma_{\theta}$ is locally expressed by
a second order ODE:

\begin{equation}
\nabla_{\dot{\gamma}_{\theta}}\dot{\gamma}_{\theta}=0\Longleftrightarrow\ddot{\gamma}_{\theta}^{k}+\sum_{i,j}\Gamma_{ij}^{k}\dot{\gamma}_{\theta}^{i}\dot{\gamma}_{\theta}^{j}=0,\,\forall k\label{eq:geodesic-equation}
\end{equation}
With little effort, the \emph{Picard-Lindel�f theorem} then assures,
that for any $(P,\,v)\in T\mathcal{M}$ there exists a locally unique
geodesic $\gamma_{P,v}\colon I\to\mathcal{M}$, that satisfies the
initial conditions $\gamma_{P,v}(0)=P$ and $\dot{\gamma}_{P,v}(0)=v$.
Thereby the locally uniqueness extends to an maximal open interval
$I=(a,\,b)$ in $\mathbb{R}$. If $\nabla$ is furthermore \emph{torsion
free} i.e. $T(X,\,Y)=0$, for all $X,\,Y\in\Gamma(T\mathcal{M})$,
then $\nabla$ is termed a \emph{Levi-Civita connection} and the Christoffel-symbols
may explicitly be derived by the equation $\Gamma_{ij}^{k}=\frac{1}{2}\sum_{l}g^{kl}(\partial_{i}g_{jl}+\partial_{j}g_{il}-\partial_{l}g_{ij})$,
where $g^{kl}$ denote the coefficients of the inverse fundamental
matrix $G_{P}^{-1}$, which existence is assured by the properties
of $g_{P}$. Therefore it follows, that any pseudo-Riemannian metric
$g$ uniquely admits a Levi-Civita connection $\nabla^{g}$. For this
reason the choice of a pseudo-Riemannian metric naturally induces
a global geometry to a differentiable manifold and therefore w.r.t.
statistical manifolds justifies the definition of \emph{pseudo-Riemannian
statistical manifolds}.
\begin{defn*}[Pseudo-Riemannian statistical manifold]
\label{def:Riemannian-statistical-manifold} \emph{Let $(\mathcal{M},\,\mathcal{A})$
be a differentiable statistical manifold and $g$ a Pseudo-Riemannian
metric on $\mathrm{KQ}(\mathcal{M},\,\mathcal{A})$. Then the tuple
$(\mathcal{M},\,g)$ is termed a Pseudo-Riemannian statistical manifold.
}\textbf{Remark}:\emph{ The category of $k$-differentiable Pseudo-Riemannia
statistical manifolds is denoted by $\mathbf{StatMan}_{\text{R}}^{k}$.}
\end{defn*}
Generally Pseudo-Riemannian manifolds endow the notation of geodesics
with an intuitive meaning as the trajectories of free particles. Thereby
the equations of motion obey the principle of stationary action, whereat
the \emph{action functional} $\mathcal{S}(\gamma)\coloneqq\int_{a}^{b}\mathcal{L}\mathrm{d}t$
is defined over the Lagrangian\emph{ }$\mathcal{L}\coloneqq\frac{1}{2}g(\dot{\gamma},\,\dot{\gamma})$.
The properties of the Levi-Civita connection then allow the transformation
of the local geodesic equation \ref{eq:geodesic-equation} to \emph{Euler-Lagrange
equations}, over the local Lagrangian $\mathcal{L}_{\theta}\coloneqq\sum_{i,j}\frac{1}{2}g_{ij}\dot{\gamma}_{\theta}^{i}\dot{\gamma}_{\theta}^{j}$,
such that:
\begin{equation}
\nabla_{\dot{\gamma}_{\phi}}^{g}\dot{\gamma}_{\phi}=0\Longleftrightarrow\frac{\mathrm{d}}{\mathrm{d}t}\left(\frac{\partial\mathcal{L}_{\phi}}{\partial\dot{\gamma}_{\phi}^{k}}\right)-\frac{\partial\mathcal{L}_{\phi}}{\partial\gamma_{\phi}^{k}}=0,\,\forall k\label{eq:Euler-Lagrange-equations}
\end{equation}
Consequentially the geodesics of Pseudo-Riemannian manifolds are
stationary solutions of the action functional $\mathcal{S}(\gamma)$,
i.e. $\delta\mathcal{S}=0$. By regarding the tangent bundle $T\mathcal{M}$
as the \emph{configuration space} of a moving particle and its elements
$(q,\,\dot{q})\in T\mathcal{M}$ as the\emph{ generalized coordinates},
the Lagrangian equals its \emph{kinetic term}. With regard to equation
\ref{eq:Euler-Lagrange-equations} the geodesics of $(\mathcal{M},\,g)$
then coincide with the trajectories of free particles. This encourages
the interpretation of $(\mathcal{M},\,g)$ as a dynamical system,
where the temporal evolution is determined by the \emph{geodesic flow
}$\Phi^{t}:T\mathcal{M}\to T\mathcal{M}$ with $\Phi^{t}(q,\,\dot{q})=(\gamma_{q,\dot{q}}(t),\,\dot{\gamma}_{q,\dot{q}}(t))$,
where $\gamma_{q,\dot{q}}(t)$ is the locally unique geodesic, that
satisfies the initial conditions $\gamma_{q,\dot{q}}(0)=q$ and $\dot{\gamma}_{q,\dot{q}}(0)=\dot{q}$.\emph{
}Then due to $\frac{\mathrm{d}}{\mathrm{d}t}g(\dot{q},\,\dot{q})=g(\nabla_{\dot{q}}^{g}\dot{q},\,\dot{q})=0$
the geodesic flow preserves the kinetic term along its trajectories,
and therefore generalizes Newton's first law of motion to curvilinear
and pseudo-Euclidean spaces. In appreciation of its origins in the
conceptualization of spacetime, a geodesic $\gamma$ is therefore
termed \emph{spacelike} if $g(\dot{q},\,\dot{q})>0$, \emph{lightlike}
if $g(\dot{q},\,\dot{q})=0$ and \emph{timelike} if $g(\dot{q},\,\dot{q})<0$.
Moreover the Pseudo-Riemannian metric $g$ induces a canonical isomorphism
between the tangent spaces $T_{q}\mathcal{M}$ and their respective
dual spaces $T_{q}^{*}\mathcal{M}$, the \emph{cotangent spaces},
which assigns a \emph{cotangent vector} $p\in T_{q}^{*}\mathcal{M}$
to each tangent vector $\dot{q}\in T_{q}\mathcal{M}$ by $p(v)\coloneqq g_{q}(\dot{q},\,v)$.
Then also the choice of a local frame $\{\partial_{i}\}$ uniquely
induces a \emph{local coframe} $\{\mathrm{d}q^{i}\}$ by $\mathrm{d}q^{i}\coloneqq\partial_{i}^{T}G_{q}$,
such that any $p\in T_{q}^{*}\mathcal{M}$ has a local representation
$p=p_{i}\mathrm{d}q^{i}$. As the geodesic flow however preserves
the kinetic term it holds, that $\frac{\mathrm{d}}{\mathrm{d}t}p(\dot{q})=\frac{\mathrm{d}}{\mathrm{d}t}g(\dot{q},\,\dot{q})=0$,
such that $p$ equals the \emph{conjugate momentum} of $\dot{q}$.
Consequentially the disjoint union of all cotangent spaces, given
by the\emph{ cotangent bundle} $T^{*}\mathcal{M}$ equals the \emph{phase
space} of the dynamical system. Finally by the definition of the \emph{Hamiltonian}
$\mathcal{H}(q,\,p)\coloneqq\frac{1}{2}g^{ij}p_{i}p_{j}$ with $(g^{ij})=G_{q}^{-1}$
it follows, that $(\mathcal{M},\,g)$ uniquely corresponds to a \emph{Hamiltonian
system}, since (i) $\dot{q}^{i}=g^{ij}p_{j}=\frac{\partial\mathcal{H}}{\partial p_{i}}$
and (ii) $\dot{p}^{i}=-\frac{\partial}{\partial\gamma_{i}}\frac{1}{2}g^{ij}p_{i}p_{j}=-\frac{\partial\mathcal{H}}{\partial q_{i}}$.
This representation allows to reformulate the principle of stationary
action in \emph{canonical coordinates} $(q,\,p)\in T^{*}\mathcal{M}$
by the \emph{curve integral }$\mathcal{S}(q)=\int_{a}^{b}\mathcal{L}\mathrm{d}t=\frac{1}{2}\int_{q}p$.
In particular this formulation, known as \emph{Maupertuis' principle}
then describes the trajectory of a free particle by its geometric
shape instead of its temporal evolution. Due to this geometric interpretation
the action functional of a curve may also be defined with regard to
a given vector field of conjugate momenta $p\in\Gamma(T^{*}\mathcal{M})$,
by $\mathcal{S}_{p}(q)\coloneqq\frac{1}{2}\int_{q}p$. Then for arbitrary
smooth curves $q\colon[a,\,b]\to\mathcal{M}$ the action $\mathcal{S}_{p}(q)$
is completely determined by its boundary values localized at $q_{a}$
and $q_{b}$, such that: 

\[
\mathcal{S}_{p}(q)=\frac{1}{2}\int_{q}p=\mathcal{S}_{p}(q_{b})-\mathcal{S}_{p}(q_{a})
\]
Although w.r.t. the fundamental theorem of calculus this insight seems
rather trite, it provides a far-reaching generalisation. Thereby in
the very same manner as the smooth sections of $T^{*}\mathcal{M}$
constitute the smooth linear forms over $T\mathcal{M}$, the smooth
alternating multilinear forms over $T\mathcal{M}^{k}$, termed \emph{differential
$k$-forms} are given by smooth sections of the \emph{outer product}
$\Lambda^{k}(T^{*}\mathcal{M})$. These $k$-forms then provide the
natural integrands over curves, surfaces, volumes or higher-dimensional
$k$-manifolds and therefore may be thought as measures of the flux
through infinitesimal $k$-parallelotopes. In this sense the smooth
functions over $\mathcal{M}$ are \emph{$0$-forms} and the smooth
vector fields over $\mathcal{M}$ are \emph{$1$-forms}. In particular
however since for any smooth function $f\in C^{\infty}(\mathcal{M})$
the differential $\mathrm{d}f$ is a smooth vector field, it appears
that $\mathrm{d}$ by itself defines an $\mathbb{R}$-linear mapping
$\mathrm{d}\colon\Omega^{0}(\mathcal{M})\to\Omega^{1}(\mathcal{M})$,
where $\Omega^{k}(\mathcal{M})\coloneqq\Gamma(\Lambda^{k}(T^{*}\mathcal{M}))$.
This encourages to extend the notation of a differential to arbitrary
$k$-forms by the \emph{exterior derivative}, given by an $\mathbb{R}$-linear
mapping $\mathrm{d}_{k}\colon\Omega^{k}(\mathcal{M})\to\Omega^{k+1}(\mathcal{M})$,
that satisfies (i) $\mathrm{d}_{k}$ is an antiderivation for any
$k\in\mathbb{N}_{0}$, (ii) $\mathrm{d}_{k+1}\circ\mathrm{d}_{k}=0$
for any $k\in\mathbb{N}_{0}$ and (iii) $\mathrm{d}_{0}$ is the differential.
In more detail (i) claims, that for any $\alpha\in\Omega^{k}(\mathcal{M})$
and $\beta\in\Omega^{l}(\mathcal{M})$ it follows, that $\mathrm{d}_{k+l}(\alpha\wedge\beta)=\mathrm{d}_{k}\alpha\wedge\beta+(-1)^{l}(\alpha\wedge\mathrm{d}_{l}\beta)$.
This provides the property, that infinitesimal changes of the volume
$\alpha\wedge\beta$ are expressible as the sum of infinitesimal changes
in their orthocomplemented constituent volumes. Then the additional
claim (ii) assures the symmetry of second derivatives and (iii) the
compatibility with the differential. In order to provide a measure
of length however the Pseudo-Riemannian metric $g$ is additionally
required to be positive definite, i.e. such that $\forall P\in\mathcal{M}$
the $g_{p}$ are positive definite. Then $g$ is termed a \emph{Riemannian
metric} and a statistical manifold $(\mathcal{M},\,g)$ a \emph{Riemannian
statistical manifold}. In this case the Riemannian metric defines
an inner product $\langle\cdot,\,\cdot\rangle_{g}\colon T_{P}\mathcal{M}{}^{2}\to\mathbb{R}$
by $(v,\,w)\mapsto g_{P}(v,\,w)$ and therefore induces a norm $\|\cdot\|_{g}\colon T_{P}\mathcal{M}\to\mathbb{R}$
by $\|v\|_{g}\coloneqq\sqrt{\langle v,\,v\rangle_{g}}$. This allows
the definition of the \emph{length} functional of a piecewise smooth
curve.
\begin{defn*}[Arc length]
\label{def:Length-of-a-curve} \emph{Let $(\mathcal{M},\,g)$ be
a Riemannian statistical manifold and $\gamma\colon[a,\,b]\to\mathcal{M}$
a piecewise smooth curve in $\mathrm{KQ}(\mathcal{M},\,g)$. Then
the arc length of $\gamma$ is given by:
\begin{equation}
L(\gamma)\coloneqq\int_{a}^{b}\|\dot{\gamma}(t)\|_{g}\mathrm{d}t\label{eq:manifold:metric:geodesic}
\end{equation}
}
\end{defn*}
Analogues to the action functional, the length functional may be written
by a Lagrangian, which is given by $\mathcal{L}_{L}(\gamma,\,\dot{\gamma},\,t)\coloneqq\sqrt{g(\dot{\gamma},\,\dot{\gamma})}$,
such that $\mathcal{L}_{L}=\sqrt{2\mathcal{L}}$. Then the Euler-Lagrange
equations for the length functional are equivalent to the Euler-Lagrange
equations for action functional, such that the stationary solutions
of the length and action functional coincide. This property allows
to equip Riemannian statistical manifolds with a \emph{distance.}
\begin{defn*}[Distance]
\label{def:Distance} \emph{Let $(\mathcal{M},\,g)$ be a Riemannian
statistical manifold, then the distance }$d\colon\mathcal{M}^{2}\to\mathbb{R}$\emph{
of $P,\,Q\in\mathcal{M}$ is defined by:
\begin{equation}
d(P,\,Q)\coloneqq\begin{cases}
\infty & \text{, if \ensuremath{P} and \ensuremath{Q} are not}\\
 & \text{ path connected in \ensuremath{\mathcal{M}}}\\
\inf L(\gamma) & \text{, where \ensuremath{\gamma\colon[a,\,b]\to\mathcal{M}} }\\
 & \text{ with \ensuremath{\gamma(a)=P,\,\gamma(b)=Q}}
\end{cases}\label{eq:manifold:metric:distance-1}
\end{equation}
}
\end{defn*}
Due to its definition the distance $d$ of a Riemannian statistical
manifold for arbitrary $P,\,Q,\,R\in\mathcal{M}$ satisfies, that:
(i) $d(P,\,P)=0$, (ii) $d(P,\,Q)=d(Q,\,R)$ and (iii) $d(P,\,Q)+d(Q,\,R)\leq d(P,\,R)$.
In order to show, that $(\mathcal{M},\,d)$ is a metric space it therefore
suffices to prove, that $d(P,\,Q)>0$ for $P\ne Q$. Let $(\mathcal{M},\,g)$
and $(\mathcal{N},\,g^{'})$ be Pseudo-Riemannian manifolds, and $f\in C^{\infty}(\mathcal{M},\,\mathcal{N})$,
then $f$ is an isometry, iff $g_{P}(v,\,w)=g_{f(P)}^{'}(\mathrm{d}f_{P}(v),\,\mathrm{d}f_{w}(P))$
for all $P\in\mathcal{M}$ and $v,\,w\in T_{P}\mathcal{M}$ .

Let $(\mathcal{P},\,\mathcal{A})$ be a differentiable statistical
manifold. \emph{Then a mapping $D(\cdot\parallel\cdot)\colon\mathcal{P}\times\mathcal{P}\to\mathbb{R}^{+}$
is termed a divergence over $\mathcal{P}$, iff for all $P,\,Q\in\mathcal{P}$
it holds, that $D(P\parallel Q)\geq0$, with $D(P\parallel Q)=0\Leftrightarrow P=Q$}.
If 

these scalar products are usually induced by derivatives of locally
linear divergences.
\begin{defn*}[Statistical divergence]
\label{def:Divergence} \emph{Let 
\[
(X,\,\mathcal{P})\in\mathrm{ob}(\mathbf{Stat})
\]
 Then a mapping $D(\cdot\parallel\cdot)\colon\mathcal{P}\times\mathcal{P}\to\mathbb{R}^{+}$
is termed a (statistical) divergence over $\mathcal{P}$, iff for
all $P,\,Q\in\mathcal{P}$ it holds, that 
\begin{align}
 & D(P\parallel Q)\geq0,\,\forall P,\,Q\in\mathcal{P}\\
 & D(P\parallel Q)=0\Leftrightarrow P=Q,\,\forall P,\,Q\in\mathcal{P}
\end{align}
}
\end{defn*}
\begin{defn*}[Locally linear divergence]
\label{def:Locally-linear-divergence}\emph{ Let 
\[
(X,\,\mathcal{P_{\xi}})\in\mathrm{ob}(\mathbf{StatMan}^{k})
\]
 and let $D$ be a statistical divergence over $(X,\,\mathcal{P})$.
Then $D$ is termed locally linear, iff for all $P\in\mathcal{P}$
the linearisation of $D$ at $P$ is given by a positive definite
matrix $G_{\xi}(P)$ , such that:
\begin{equation}
D[P_{\xi}\parallel P_{\xi}+\mathrm{d}P]=\frac{1}{2}\mathrm{d}\boldsymbol{\xi}^{T}G_{\xi}(P_{\xi})\mathrm{d}\boldsymbol{\xi}+O(n^{3})\label{eq:divergence_taylor}
\end{equation}
}
\end{defn*}
\begin{example*}[Kullback-Leibler divergence]
\label{exa:Kullback-Leibler-divergence} \emph{Let 
\[
(X,\,\mathcal{P})\in\mathrm{ob}(\mathbf{Stat})
\]
Then for $P,\,Q\in\mathcal{P}$ the Kullback-Leibler divergence $D_{KL}[\cdot\parallel\cdot]:\mathcal{P}\times\mathcal{P}\to\mathbb{R}^{+}$
is defined by:
\begin{equation}
D_{\mathrm{KL}}[P\parallel Q]\coloneqq\int_{X}\mathrm{d}_{\mu}P(x)\log\frac{\mathrm{d}_{\mu}P(x)}{\mathrm{d}_{\mu}Q(x)}\mathrm{d}\mu(x)\label{eq:divergence:kl}
\end{equation}
}\textbf{Remark}:\emph{ The Kullback-Leibler divergence measures the
amount of information, which is gained when one revises ones beliefs
from the prior probability distribution $P$ to the posterior probability
distribution $Q$.}
\end{example*}
Riemannian statistical manifolds \emph{$(X,\,\mathcal{P},\,D)$} are
identified with Riemannian manifolds $(M,\,g)$, where $M$ is given
by $(X,\,\mathcal{P})$ and the Riemannian metric $g$ by the linearisation
of $D$. This identification is well-defined, since $(X,\,\mathcal{P})$
is by definition a smooth manifold and the linearisation of a locally
linear divergence for any $P\in\mathcal{P}$ yields a positive definite
matrix $G(P)$. Let $\xi$ be a differentiable parametrisation of
$(X,\,\mathcal{P})$, then the line element $\mathrm{d}s^{2}$ has
a local representation:
\begin{equation}
\mathrm{d}s_{P}^{2}=2D[P\parallel P+\mathrm{d}P]=\mathrm{d}\boldsymbol{\xi}^{T}G_{\xi}(P_{\xi})\mathrm{d}\boldsymbol{\xi}\label{eq:manifold:metric:idistance}
\end{equation}
This allows the determination of distances in \emph{$(X,\,\mathcal{P})$}
by the length of continuously differentiable curves.
\begin{lem}
\label{lem:3.6}Let $(X,\,\mathcal{P},\,D)$ be a Riemannian statistical
manifold and $P,\,Q\in\mathcal{P}$. Then the length of continuously
differentiable curves $\gamma_{P,Q}:[a,\,b]\to(X,\,\mathcal{P})$
with $\gamma(a)=P$ and $\gamma(b)=Q$ has a unique infimum $d_{P,Q}$,
such that $d_{P,Q}\geq0$ and $d_{P,Q}=0\Leftrightarrow P=Q$. 
\end{lem}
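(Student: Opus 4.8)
The plan is to check the three assertions about $d_{P,Q}$ in turn, of which only the implication $P\neq Q\Rightarrow d_{P,Q}>0$ requires genuine work. Write $\mathcal{L}(P,Q)$ for the set of arc lengths $L(\gamma)$ of continuously differentiable curves $\gamma\colon[a,b]\to(X,\mathcal{P})$ with $\gamma(a)=P$ and $\gamma(b)=Q$. When $P$ and $Q$ lie in the same connected component, $\mathcal{L}(P,Q)\neq\emptyset$: since $(X,\mathcal{P})$ is a smooth, hence locally path-connected, manifold there is a continuous path joining $P$ to $Q$, and by the usual smoothing argument it may be replaced by a $C^{1}$ curve with the same endpoints. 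Moreover $\mathcal{L}(P,Q)\subseteq[0,\infty)$, since $\|\dot\gamma(t)\|_{g}=\sqrt{\langle\dot\gamma(t),\dot\gamma(t)\rangle_{g}}\geq0$ by positive definiteness of $g$, so the integrand in \ref{eq:manifold:metric:geodesic} is non-negative. Hence $d_{P,Q}\coloneqq\inf\mathcal{L}(P,Q)$ is a well-defined element of $[0,\infty)$ (with the convention $\inf\emptyset=+\infty$ when $P,Q$ are not path connected, matching the definition of the distance), which already gives existence, uniqueness, and $d_{P,Q}\geq0$. The direction $P=Q\Rightarrow d_{P,Q}=0$ is immediate: the constant curve $\gamma\equiv P$ is continuously differentiable with $\dot\gamma\equiv0$ and $L(\gamma)=0$. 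It remains to prove $P\neq Q\Rightarrow d_{P,Q}>0$.

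To this end choose a coordinate chart $(U,\phi)\in\mathcal{A}$ with $P\in U$ and, after composing $\phi$ with a translation, $\phi(P)=0$. Since $\phi(U)$ is open, pick $r>0$ with $\overline{B}_{r}\coloneqq\{\boldsymbol{x}\in\mathbb{R}^{n}:\|\boldsymbol{x}\|\leq r\}\subseteq\phi(U)$, shrinking $r$ further so that $r<\|\phi(Q)\|$ in case $Q\in U$; then $Q\notin K$ for $K\coloneqq\phi^{-1}(\overline{B}_{r})$. On $K$ the metric coefficients $g_{ij}$ are continuous, so the function $(\boldsymbol{x},\boldsymbol{\xi})\mapsto\boldsymbol{\xi}^{T}G_{\phi^{-1}(\boldsymbol{x})}\boldsymbol{\xi}$ is continuous on the compact set $\overline{B}_{r}\times\{\boldsymbol{\xi}\in\mathbb{R}^{n}:\|\boldsymbol{\xi}\|=1\}$ and attains there a minimum $c$, which is strictly positive because every fundamental matrix $G_{p}$ is positive definite. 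By homogeneity it follows that for every $p\in K$ and every $v=\xi^{i}\partial_{i}\in T_{p}\mathcal{M}$ one has $\|v\|_{g}=\sqrt{\boldsymbol{\xi}^{T}G_{p}\boldsymbol{\xi}}\geq\sqrt{c}\,\|\boldsymbol{\xi}\|$; that is, along any curve segment contained in $K$ the Riemannian arc length dominates $\sqrt{c}$ times the Euclidean length of its coordinate representation.

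Now let $\gamma\colon[a,b]\to(X,\mathcal{P})$ be any admissible curve, so $\gamma(a)=P$ with $\phi(P)=0$ and $\gamma(b)=Q\notin K$. Set $t_{0}\coloneqq\sup\{t\in[a,b]:\gamma([a,t])\subseteq K\}$; since $K$ is closed and $Q\notin K$, continuity of $\gamma$ together with $0$ lying in the interior of $\overline{B}_{r}$ forces $a<t_{0}<b$, $\gamma([a,t_{0}])\subseteq K$, and $\|\phi(\gamma(t_{0}))\|=r$. Thus the coordinate curve $\phi\circ\gamma|_{[a,t_{0}]}$ joins $0$ to a point of Euclidean norm $r$ while staying in $\overline{B}_{r}$, hence has Euclidean length at least $r$; restricting the integral defining $L(\gamma)$ in \ref{eq:manifold:metric:geodesic} to $[a,t_{0}]$ and applying the estimate of the previous paragraph yields $L(\gamma)\geq L(\gamma|_{[a,t_{0}]})\geq\sqrt{c}\,r$. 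Since this lower bound is independent of $\gamma$, taking the infimum over all admissible curves gives $d_{P,Q}\geq\sqrt{c}\,r>0$, which completes the proof.

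The step carrying the real content is the second paragraph, namely the extraction of the uniform constant $c>0$ comparing the $g$-norm to the background Euclidean norm of the chart. This is exactly where positive definiteness of the Riemannian metric $g$ — as opposed to mere non-degeneracy of a pseudo-Riemannian one — is indispensable, and the only subtlety is the compactness argument promoting pointwise positivity of the forms $G_{p}$ to a positive lower bound uniform over $K$; the curve-length estimate that follows, and the topological bookkeeping around $t_{0}$, are routine.
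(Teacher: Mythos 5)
Your proof is correct, but it takes a genuinely different route from the paper's. The paper derives everything from the single inequality $L(\gamma_{P,Q})\geq D[\gamma(a)\parallel\gamma(b)]\geq0$, asserted to follow from local linearity of $D$: non-negativity and existence of the infimum come from this bound, $P=Q\Rightarrow d_{P,Q}=0$ from contracting curves at $P$, and $P\neq Q\Rightarrow d_{P,Q}>0$ from the divergence axiom $D[P\parallel Q]>0$. Your argument instead never touches the divergence: you get existence, uniqueness and non-negativity directly from the fact that the set of arc lengths is a non-empty subset of $[0,\infty)$, and you prove strict positivity for $P\neq Q$ by the classical chart-plus-compactness argument, extracting a uniform constant $c>0$ comparing $\|\cdot\|_{g}$ to the Euclidean norm on a closed coordinate ball $K$ around $P$ with $Q\notin K$, and observing that any admissible curve must cross $\partial K$. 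What your approach buys is rigour and self-containment: it uses only positive definiteness and continuity of $g$, and every step is checkable. The paper's approach buys a tighter (and quantitative) link between the metric distance and the divergence, but its key inequality \ref{eq:manifold:metric:geodesic_vs_divergence} is stated without proof and does not in fact follow from local linearity alone --- the expansion \ref{eq:divergence_taylor} makes $D$ scale quadratically in $\mathrm{d}\boldsymbol{\xi}$ while $L$ scales linearly, so no such global comparison can be read off from the local behaviour. In that sense your proof closes a gap that the paper's proof leaves open, at the cost of severing the explicit connection to $D$. One cosmetic point: you should say a word about why a continuous path can be replaced by a piecewise-$C^{1}$ one of comparable length (a finite chain of charts and convexity of small coordinate balls suffices), but this is standard and does not affect the conclusion.
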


\begin{proof}
Since $D$ is locally linear, it follows that for any curve $\gamma$
from $P$ to $Q$ it holds, that: 
\begin{equation}
L(\gamma_{P,Q})\geq D[\gamma(a)\parallel\gamma(b)]\geq0\label{eq:manifold:metric:geodesic_vs_divergence}
\end{equation}
Therefore the length of all continuously differentiable curves from
$P$ to $Q$ has a unique infimum $d_{P,Q}\geq0$. Let $P=Q$, then
the continuously differentiable curves connecting $P$ and $Q$ may
be contracted at $P$ such that $d_{P,Q}=\inf L(\gamma_{P,Q})=\lim\inf_{Q\to P}D[P\parallel Q]=D[P\parallel P]=0$.
Conversely let $P\ne Q$, then $L(\gamma_{P,Q})\geq D[P\parallel Q]>0$. 
\end{proof}
Then for $P,\,Q\in\mathcal{P}$ a geodesic from $P$ to $Q$ is given
by a continuous differentiable curve $\gamma_{P,Q}:[a,\,b]\to(X,\,\mathcal{P})$
with $\gamma_{P,Q}(a)=P$ and $\gamma_{P,Q}(b)=Q$, such that $\gamma_{P,Q}$
minimizes the length among all continuously differentiable curves
from $P$ to $Q$. In the purpose to preserve the distance of the
Riemannian structure, the parametrisation of $(X,\,\mathcal{P},\,D)$
requires to preserve the property of a curve to be a geodesic within
the parameter space. These preserved geodesics are termed affine geodesics.
If a differentiable parametrisation globally preserves the distances,
then it is given by an isometric embedding of \emph{$(X,\,\mathcal{P},\,D)$}
and termed an affine parametrisation.
\begin{defn*}[Affine parametrisation]
\label{def:Affine-parametrisation}\emph{ Let $(X,\,\mathcal{P},\,D)$
be a Riemannian statistical manifold. Then a parametrisation $\xi$
is termed an affine parametrisation for $(X,\,\mathcal{P},\,D)$,
iff the geodesics in $(X,\,\mathcal{P},\,D)$ are $\xi$-affine geodesics.
}\textbf{Remark}:\emph{ A Riemannian statistical manifolds, given
by the notation $(X,\,\mathcal{P}_{\xi},\,D)$ implicates an affine
parametrisation $\xi$.}
\end{defn*}
The embedding of a smooth statistical manifold $(X,\,\mathcal{Q})$
within a Riemannian statistical manifold $(X,\,\mathcal{P},\,D)$
naturally induces the Riemannian metric to the submanifold $(X,\,\mathcal{Q})$,
such that also $(X,\,\mathcal{Q},\,D)$ is a Riemannian statistical
manifold. This is of particularly importance for the approximation
of high dimensional statistical models by lower dimensional submanifolds.
For this purpose the Riemannian metric is fundamental to obtain a
projection from probability distributions in $\mathcal{P}$ to their
closest approximation in $\mathcal{Q}$. This projection is a geodesic
projection:
\begin{defn*}[Geodesic projection]
\label{def:Geodesic-projection} \emph{Let $(X,\,\mathcal{P},\,D)$
be a Riemannian statistical manifold and $(X,\,\mathcal{Q})$ a smooth
submanifold. Then a mapping $\pi:\mathcal{P}\longrightarrow\mathcal{Q}$
is termed a geodesic projection, iff any point $P\in\mathcal{P}$
is mapped to a point $\pi(P)\in\mathcal{Q}$, that minimizes the distance
$d(P,\,\pi(P))$. }\textbf{Remark:}\emph{ By it's definition $d(P,\,\pi(P))<\infty$
iff $P$ and $\pi(P)$ are path-connected. Therefore geodesic projections
are by convention restricted to the common topological components
of $\mathcal{P}$ and $\mathcal{Q}$.}
\end{defn*}

\section{Dually flat Structure}

In Riemannian manifolds the problem to determine geodesic projections
to submanifolds is generally hard to solve, since the distance has
to be minimized over all continuously differentiable curves that connect
points to the submanifold. A particular convenient geometry however
arises by a flat Riemannian metric, whereas the flatness of the metric
is related to the direction of curves. The claim for a further flat
structure, which is given by the dual Riemannian metric allows a generalization
of the Pythagorean theorem and therefore an explicit calculation rule
for geodesic projections by dual affine linear projections. 
\begin{defn*}[Dual Riemannian metric]
\label{def:Dual-Riemannian-metric} \emph{Let $(M,\,g)$ be a Riemannian
manifold, then the Riemannian metric tensor $g$ is given by a family
of positive definite matrices $\{g_{P}\}_{P\in M}$. Then metric $g^{*}$,
with is dual to $g$ is given by the family of the inverse Riemannian
metric tensors, such that $g_{P}^{*}=g_{P}^{-1},\,\forall P\in M$.}
\end{defn*}
The dual Riemannian metric $g^{*}$ may be regarded as the Riemannian
metric with a locally inverse direction. In Riemannian statistical
manifolds this definition corresponds to the dual divergence.
\begin{defn*}[Dual divergence]
\label{def:Dual-divergence} \emph{Let 
\[
(X,\,\mathcal{P})\in\mathrm{ob}(\mathbf{Stat})
\]
and $D$ be a locally linear divergence over $(X,\,\mathcal{P})$.
Then the dual divergence $D^{*}$ w.r.t. $D$ is given by:
\begin{equation}
D^{*}[P\parallel Q]=D[Q\parallel P],\,\forall P,\,Q\in\mathcal{P}
\end{equation}
}
\end{defn*}
In its most simple case the Riemannian metric $g$, induced by the
divergence $D$ equals the dual Riemannian metric $g^{*}$, induced
by the dual divergence $D^{*}$. In this case the Riemannian metric
and the divergence are termed self-dual. 
\begin{defn*}[Self-dual Riemannian metric]
\label{def:Self-dual-Riemannian-metric} \emph{Let $(M,\,g)$ be
a Riemannian manifold, then the Riemannian metric tensor $g$ is termed
self-dual, iff $g^{*}=g$.}
\end{defn*}

\subsection{Dual parametrisation and Legendre transformation}

By considering a differentiable statistical manifold \emph{$(X,\,\mathcal{P}_{\xi})$}
and a real valued differentiable convex function $\psi:\mathrm{img}\xi\rightarrow\mathbb{R}$,
the differentiability of $\psi$ may be used to introduce a further
differentiable parametrisation of $(X,\,\mathcal{P})$ by $\boldsymbol{\xi}_{P}^{*}\coloneqq\nabla_{\xi}\psi(\boldsymbol{\xi}_{P})$.
Furthermore since $\psi$ is convex, the Jacobian determinant is positive
for any $\boldsymbol{\xi}_{P}\in\mathrm{dom}\xi$ and therefore the
transformation $\xi\to\xi^{*}$ is globally invertible. This defines
a bijective relationship between parameter vectors $\boldsymbol{\xi}_{P}\in\mathrm{dom}\xi$
and their respectively normal vectors in the tangent space, given
by $\boldsymbol{\xi}_{P}^{*}\in\mathrm{dom}\xi^{*}$. Since $\xi$
is an identifiable parametrisation and the transformation $\xi\to\xi^{*}$
is globally invertible, also $\xi^{*}$ is an identifiable parametrisation.
This justifies the following definition:
\begin{defn*}[Dual parametrisation]
\label{def:Dual-parametrisation} \emph{Let $(X,\,\mathcal{P}_{\xi})$
be a differentiable statistical manifold and $\psi:\mathrm{dom}\xi\rightarrow\mathbb{R}$
a sufficiently differentiable convex function. Then the dual parametrisation
for $(X,\,\mathcal{P})$ w.r.t. $\psi$ is given by:
\begin{align}
\boldsymbol{\xi}_{P}^{*} & =\nabla_{\xi}\psi(\boldsymbol{\xi}_{P}),\,\forall P\in\mathcal{P}\label{eq:def:dualparametrisation:1}
\end{align}
}
\end{defn*}
Due to the convexity of $\psi$, also the inverse transformation $\xi^{*}\to\xi$
may be represented by the partial derivation of dual function $\psi^{*}:\mathrm{dom}\xi^{*}\rightarrow\mathbb{R}$.
This yields a transformation $(\xi,\,\psi)\to(\xi^{*},\,\psi^{*})$,
which is defined by a dualistic relationship between $(\xi,\,\psi)$
and $(\xi^{*},\,\psi^{*})$, such that additional to equation \ref{eq:def:dualparametrisation:1}
also:
\begin{align}
\boldsymbol{\xi}_{P} & =\nabla_{\xi^{*}}\psi^{*}(\boldsymbol{\xi}_{P}^{*}),\,\forall P\in\mathcal{P}\label{eq:def:legendretransformation:1}
\end{align}
This transformation is known as the \emph{Legendre transformation}
and the function $\psi^{*}$ as the \emph{Legendre dual function}
of $\psi$.
\begin{lem}
\label{lem:3.7}Let $(X,\,\mathcal{P}_{\xi})$ be a differentiable
statistical manifold, $\psi:\mathrm{dom}\xi\rightarrow\mathbb{R}$
a differentiable convex function and $(\xi,\,\psi)\to(\xi^{*},\,\psi^{*})$
a Legendre transformation of $(\xi,\,\psi)$, then the Legendre dual
function $\psi^{*}:\mathrm{dom}\text{\ensuremath{\xi}}^{*}\rightarrow\mathbb{R}$
is given by:
\begin{equation}
\psi^{*}(\boldsymbol{\xi}_{P}^{*})=\arg\max_{P}\left(\boldsymbol{\xi}_{P}\cdot\boldsymbol{\xi}_{P}^{*}-\psi(\boldsymbol{\xi}_{P})\right),\,\forall P\in\mathcal{P}\label{eq:dualfunc}
\end{equation}
\end{lem}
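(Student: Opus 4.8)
The approach is that of the classical Legendre duality; the statistical-manifold structure intervenes only to ensure that $\xi$ and $\xi^{*}$ are identifiable parametrisations with the asserted domains, so that $\mathcal{P}$ may throughout be identified with $\mathrm{dom}\,\xi$ and $\mathcal{P}$-extrema rewritten as extrema over $\mathrm{dom}\,\xi$. I read the right-hand side of \eqref{eq:dualfunc} as the attained maximal value $\max_{Q\in\mathcal{P}}(\boldsymbol{\xi}_{Q}\cdot\boldsymbol{\xi}_{P}^{*}-\psi(\boldsymbol{\xi}_{Q}))$, with $P$ the fixed point at which the dual coordinate is evaluated. First I would fix $P$, write $\boldsymbol{\eta}:=\boldsymbol{\xi}_{P}^{*}$, and study $\Phi(\boldsymbol{\xi}):=\boldsymbol{\xi}\cdot\boldsymbol{\eta}-\psi(\boldsymbol{\xi})$ on $\mathrm{dom}\,\xi$. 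As the sum of a linear form and the concave function $-\psi$, the map $\Phi$ is concave, so any critical point is a global maximum; and $\nabla_{\xi}\Phi(\boldsymbol{\xi})=\boldsymbol{\eta}-\nabla_{\xi}\psi(\boldsymbol{\xi})$ vanishes exactly when $\nabla_{\xi}\psi(\boldsymbol{\xi})=\boldsymbol{\eta}$. By the paragraph introducing the dual parametrisation, $\boldsymbol{\xi}\mapsto\nabla_{\xi}\psi(\boldsymbol{\xi})$ has everywhere positive Jacobian determinant and is globally invertible on $\mathrm{dom}\,\xi$, with $\boldsymbol{\eta}=\boldsymbol{\xi}_{P}^{*}$ the image of $\boldsymbol{\xi}_{P}$; hence the maximum is attained at $\boldsymbol{\xi}_{P}$ and $\max_{Q}(\boldsymbol{\xi}_{Q}\cdot\boldsymbol{\xi}_{P}^{*}-\psi(\boldsymbol{\xi}_{Q}))=\boldsymbol{\xi}_{P}\cdot\boldsymbol{\xi}_{P}^{*}-\psi(\boldsymbol{\xi}_{P})$.

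It then remains to identify this value with $\psi^{*}(\boldsymbol{\xi}_{P}^{*})$, i.e.\ to establish the Fenchel--Young equality $\psi(\boldsymbol{\xi}_{P})+\psi^{*}(\boldsymbol{\xi}_{P}^{*})=\boldsymbol{\xi}_{P}\cdot\boldsymbol{\xi}_{P}^{*}$. For this I would consider the defect $F(\boldsymbol{\xi}):=\boldsymbol{\xi}\cdot\nabla_{\xi}\psi(\boldsymbol{\xi})-\psi(\boldsymbol{\xi})-\psi^{*}(\nabla_{\xi}\psi(\boldsymbol{\xi}))$ on $\mathrm{dom}\,\xi$ and differentiate it. Writing $\boldsymbol{\xi}^{*}(\boldsymbol{\xi})=\nabla_{\xi}\psi(\boldsymbol{\xi})$ and using $\partial_{\xi_{k}}\psi=\xi^{*}_{k}$ from \eqref{eq:def:dualparametrisation:1} together with $\partial_{\xi^{*}_{i}}\psi^{*}=\xi_{i}$ from \eqref{eq:def:legendretransformation:1}, the chain rule gives $\partial_{\xi_{k}}F=\xi^{*}_{k}+\sum_{i}\xi_{i}\,\partial_{\xi_{k}}\xi^{*}_{i}-\xi^{*}_{k}-\sum_{i}\xi_{i}\,\partial_{\xi_{k}}\xi^{*}_{i}=0$, so $F$ is constant on each connected component of $\mathrm{dom}\,\xi$. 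Since the Legendre transformation $(\xi,\psi)\to(\xi^{*},\psi^{*})$ is normalised so that this constant vanishes (equivalently, so that the dualistic symmetry $\psi^{**}=\psi$ holds exactly), one gets $\psi^{*}(\boldsymbol{\xi}_{P}^{*})=\boldsymbol{\xi}_{P}\cdot\boldsymbol{\xi}_{P}^{*}-\psi(\boldsymbol{\xi}_{P})$, which combined with the previous paragraph is the claim.

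The main obstacle is precisely this normalisation: relation \eqref{eq:def:legendretransformation:1} determines $\psi^{*}$ only up to an additive constant on each component (the usual gauge freedom in recovering a potential from its gradient), so the real content of the lemma is that the variational formula selects the canonical representative. The choice is self-consistent, since substituting it back into \eqref{eq:def:legendretransformation:1} is an identity by the envelope computation $\nabla_{\xi^{*}}(\boldsymbol{\xi}_{P}\cdot\boldsymbol{\xi}_{P}^{*}-\psi(\boldsymbol{\xi}_{P}))=\boldsymbol{\xi}_{P}$ (where $\boldsymbol{\xi}_{P}$ is read as the function of $\boldsymbol{\xi}_{P}^{*}$ supplied by the inverse of $\nabla_{\xi}\psi$, and the terms carrying $\partial_{\xi^{*}}\boldsymbol{\xi}_{P}$ cancel by the critical-point equation). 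A minor technical point deserving one line is attainment of the supremum: even where $\Phi$ fails to be coercive it has a genuine maximum, because $\boldsymbol{\eta}\in\mathrm{dom}\,\xi^{*}$ guarantees that the critical point exists and concavity promotes it to the global maximum.
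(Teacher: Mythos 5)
Your proof is correct, and at its core it rests on the same envelope computation as the paper's: the cancellation of the $\partial_{\xi^{*}}\boldsymbol{\xi}_{P}$ terms in $\nabla_{\xi^{*}}\left(\boldsymbol{\xi}_{P}\cdot\boldsymbol{\xi}_{P}^{*}-\psi(\boldsymbol{\xi}_{P})\right)=\boldsymbol{\xi}_{P}$, which you relegate to a consistency check in your last paragraph, is essentially the entirety of the paper's proof. What you add is genuine. The paper only verifies that the variational expression satisfies the gradient condition \ref{eq:def:legendretransformation:1}; it does not address (i) why the supremum is attained, and attained at $\boldsymbol{\xi}_{P}$, nor (ii) the fact that \ref{eq:def:legendretransformation:1} pins down $\psi^{*}$ only up to an additive constant on each component, so that verifying the gradient condition does not by itself identify the variational formula with \emph{the} Legendre dual function. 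Your first paragraph settles (i) via concavity of $\Phi$ together with the global invertibility of $\nabla_{\xi}\psi$, and your Fenchel--Young defect $F$ plus the normalisation remark settles (ii), correctly isolating that the substantive content of the lemma is the selection of the canonical representative. You also rightly read the $\arg\max$ in \ref{eq:dualfunc} as the attained maximal value, the only reading under which the equation type-checks. One caveat worth a line: ``a critical point of a concave function is a global maximum'' requires $\mathrm{dom}\,\xi$ (or at least the relevant component) to be convex, an assumption the paper makes only implicitly by speaking of a convex function on that domain.
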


\begin{proof}
By applying the definition of the dual parametrisation $\xi^{*}$
it has only to be proofed, that the function $\psi^{*}$, given by
$\ref{eq:dualfunc}$ satisfies the conditions of the Legendre dual
function, given by equation \ref{eq:def:legendretransformation:1}.
Let $P\in\mathcal{P}$, then:
\begin{eqnarray*}
 &  & \nabla_{\xi^{*}}\psi^{*}(\boldsymbol{\xi}_{P}^{*})\\
 &  & \stackrel{\ref{eq:dualfunc}}{=}\boldsymbol{\xi}_{P}+\left(\partial_{\xi^{*}}\boldsymbol{\xi}_{P}\right)\cdot\boldsymbol{\xi}_{P}^{*}-\nabla_{\xi}\psi(\boldsymbol{\xi}_{P})\cdot\left(\partial_{\xi^{*}}\boldsymbol{\xi}_{P}\right)\\
 &  & =\boldsymbol{\xi}_{P}+\left(\partial_{\xi^{*}}\boldsymbol{\xi}_{P}\right)\cdot\nabla_{\xi}\psi(\boldsymbol{\xi}_{P})-\nabla_{\xi}\psi(\boldsymbol{\xi}_{P})\cdot\left(\partial_{\xi^{*}}\boldsymbol{\xi}_{P}\right)\\
 &  & =\boldsymbol{\xi}_{P}+\left(\nabla_{\xi}\psi(\boldsymbol{\xi}_{P})-\nabla_{\xi}\psi(\boldsymbol{\xi}_{P})\right)\cdot\left(\partial_{\xi^{*}}\boldsymbol{\xi}_{P}\right)=\boldsymbol{\xi}_{P}
\end{eqnarray*}
\end{proof}

\subsection{Bregman divergence}

The dualistic relationship between dual parametrisations, given by
the Legendre transformation shall be extended to Riemannian metrices.
To this end a family of locally linear divergences is introduced,
that generates a dualistic relationship structure: 
\begin{defn*}[Bregman divergence]
\label{def:Bregman-divergence} \emph{Let $(X,\,\mathcal{P}_{\xi})$
be a differentiable statistical manifold and $\psi:\mathrm{dom}\xi\rightarrow\mathbb{R}$
a differentiable convex function. Then for $P,\,Q\in\mathcal{P}$
the }\textbf{\emph{Bregman divergence}}\emph{ $D_{\psi}$ w.r.t. the
differentiable parametrisation $\xi$ is given by:
\begin{equation}
D_{\psi}[P\parallel Q]=\psi(\boldsymbol{\xi}_{P})-\psi(\boldsymbol{\xi}_{Q})-\nabla_{\xi}\psi(\boldsymbol{\xi}_{P})\cdot(\boldsymbol{\xi}_{Q}-\boldsymbol{\xi}_{P})\label{eq:def:Bregman-divergence:1}
\end{equation}
}
\end{defn*}
\begin{lem}
\label{lem:3.8}Let \emph{$D_{\psi}$ }be a Bregman divergence with
regard to a sufficiently differentiable parametrisation $\xi$, then
\emph{$D_{\psi}$} is locally linear and the Riemannian metric, induced
by \emph{$D_{\psi}$,} is given by:
\begin{equation}
g_{P}=\mathrm{\nabla_{\xi}^{2}}\psi(\boldsymbol{\xi}_{P})\label{eq:lem:3.8:1}
\end{equation}
\end{lem}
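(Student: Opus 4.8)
The plan is to read the Riemannian metric off the second-order Taylor expansion of the potential $\psi$ at the coordinate point $\boldsymbol{\xi}_P$, and to verify local linearity by matching that expansion against the defining relation \ref{eq:divergence_taylor}. First I would record that, since $\psi$ is (strictly) convex, the Bregman divergence \ref{eq:def:Bregman-divergence:1} is precisely the non-negative gap in the supporting-hyperplane inequality for $\psi$, so $D_\psi[P\parallel Q]\geq 0$ with equality exactly when $\boldsymbol{\xi}_P=\boldsymbol{\xi}_Q$, hence — $\xi$ being an identifiable parametrisation — exactly when $P=Q$; thus $D_\psi$ is a statistical divergence in the sense of the earlier definition and the assertion ``locally linear'' is meaningful.

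Now fix $P\in\mathcal{P}$, let $Q$ range over a coordinate neighbourhood of $P$, and write $\mathrm{d}\boldsymbol{\xi}\coloneqq\boldsymbol{\xi}_Q-\boldsymbol{\xi}_P$. Since $\xi$ is sufficiently differentiable and $\psi\in C^{k}$ with $k\geq 2$ on $\mathrm{dom}\,\xi$, Taylor's theorem gives
\[
\psi(\boldsymbol{\xi}_Q)=\psi(\boldsymbol{\xi}_P)+\nabla_{\xi}\psi(\boldsymbol{\xi}_P)\cdot\mathrm{d}\boldsymbol{\xi}+\frac{1}{2}\,\mathrm{d}\boldsymbol{\xi}^{T}\,\nabla_{\xi}^{2}\psi(\boldsymbol{\xi}_P)\,\mathrm{d}\boldsymbol{\xi}+O(\|\mathrm{d}\boldsymbol{\xi}\|^{3}).
\]
Substituting this into \ref{eq:def:Bregman-divergence:1}, the constant term $\psi(\boldsymbol{\xi}_P)$ cancels and the first-order term $\nabla_{\xi}\psi(\boldsymbol{\xi}_P)\cdot\mathrm{d}\boldsymbol{\xi}$ is cancelled by the affine correction that the Bregman divergence subtracts, leaving only the Hessian term and the cubic remainder:
\[
D_{\psi}[P_\xi\parallel P_\xi+\mathrm{d}P]=\frac{1}{2}\,\mathrm{d}\boldsymbol{\xi}^{T}\,\nabla_{\xi}^{2}\psi(\boldsymbol{\xi}_P)\,\mathrm{d}\boldsymbol{\xi}+O(\|\mathrm{d}\boldsymbol{\xi}\|^{3}).
\]
Comparing with \ref{eq:divergence_taylor} identifies the linearisation of $D_\psi$ at $P$ with $G_\xi(P_\xi)=\nabla_{\xi}^{2}\psi(\boldsymbol{\xi}_P)$.

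It remains to confirm that $\nabla_{\xi}^{2}\psi(\boldsymbol{\xi}_P)$ is positive definite, which is the only step that uses convexity beyond $C^{2}$-regularity. Here I would reuse the fact established in the construction of the dual parametrisation: the map $\boldsymbol{\xi}\mapsto\boldsymbol{\xi}^{*}=\nabla_{\xi}\psi(\boldsymbol{\xi})$ has Jacobian $\nabla_{\xi}^{2}\psi$, whose determinant is positive throughout $\mathrm{dom}\,\xi$; since the Hessian is symmetric, a positive determinant at every point forces it to be symmetric positive definite. Hence $D_\psi$ satisfies \ref{eq:divergence_taylor} and is a locally linear divergence, and by the local representation \ref{eq:manifold:metric:idistance} of the line element the Riemannian metric induced by $D_\psi$ has fundamental matrix $g_P=\nabla_{\xi}^{2}\psi(\boldsymbol{\xi}_P)$ at each $P$, establishing \ref{eq:lem:3.8:1}.

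The argument is elementary and I expect no genuine obstacle, only bookkeeping: one must ensure the $O(\|\mathrm{d}\boldsymbol{\xi}\|^{3})$ remainder is uniform on a neighbourhood of $P$ so that it legitimately matches the $O(n^{3})$ term of \ref{eq:divergence_taylor} — which calls for $\psi\in C^{3}$, or a Peano-form remainder if only $C^{2}$ is assumed — and one should be explicit that ``convex'' here is meant in the strict, non-degenerate-Hessian sense implicit wherever $\xi^{*}=\nabla_{\xi}\psi$ is used. The algebraic core, the cancellation of the zeroth- and first-order terms, is forced by the very shape of \ref{eq:def:Bregman-divergence:1}.
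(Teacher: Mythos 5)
Your proposal is correct and follows essentially the same route as the paper's (very terse) proof: Taylor-expand $\psi$ at $\boldsymbol{\xi}_P$, observe that the zeroth- and first-order terms cancel against the affine correction in \ref{eq:def:Bregman-divergence:1} so that the quadratic coefficient is $\nabla_{\xi}^{2}\psi(\boldsymbol{\xi}_P)$, use convexity for positive definiteness, and read $g_P$ off the identification of the metric with the linearisation $G_\xi(P_\xi)$ — you merely make explicit the steps the paper compresses into one sentence. One small caveat: the cancellation you describe presupposes the standard Bregman form with the gradient evaluated at $\boldsymbol{\xi}_Q$ against $(\boldsymbol{\xi}_P-\boldsymbol{\xi}_Q)$; equation \ref{eq:def:Bregman-divergence:1} as printed evaluates it at $\boldsymbol{\xi}_P$ against $(\boldsymbol{\xi}_Q-\boldsymbol{\xi}_P)$, under which the first-order terms add rather than cancel, so you are (reasonably) correcting an apparent sign typo that the rest of the paper, e.g.\ the mixed representation \ref{eq:divergence_bregman_mixed}, also implicitly corrects.
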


\begin{proof}
By applying the definition of a differentiable convex function it
follows, that $D_{\psi}$ is locally linear and the linearisation
term of the Taylor expansion yields $G_{\xi}(P_{\xi})=\nabla_{\xi}^{2}\psi(\boldsymbol{\xi}_{P})$.
Since by the definition of a Riemannian statistical manifold $g_{P}=G_{\xi}(P_{\xi})$,
it follows that $\mathrm{g_{P}=\nabla_{\xi}^{2}}\psi(\boldsymbol{\xi}_{P})$
\end{proof}
\begin{lem}
\label{lem:3.9}Let $D_{\psi}$ be a Bregman divergence, then the
dual divergence $D_{\psi}^{*}$ is given by the Bregman divergence
of the Legendre dual function $\psi^{*}$, such that:
\begin{equation}
D_{\psi}^{*}[P\parallel Q]=D_{\psi^{*}}[P\parallel Q]\label{eq:divergence_dual_divergence}
\end{equation}
\end{lem}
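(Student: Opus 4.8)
The plan is to compute the dual divergence $D_\psi^*[P\parallel Q] = D_\psi[Q\parallel P]$ directly from the definition and then recognize the result as the Bregman divergence $D_{\psi^*}[P\parallel Q]$ by invoking the Legendre duality relations \eqref{eq:def:dualparametrisation:1} and \eqref{eq:def:legendretransformation:1} together with the formula for $\psi^*$ from Lemma~\ref{lem:3.7}. First I would write out
\[
D_\psi^*[P\parallel Q] = D_\psi[Q\parallel P] = \psi(\boldsymbol{\xi}_Q) - \psi(\boldsymbol{\xi}_P) - \nabla_\xi\psi(\boldsymbol{\xi}_Q)\cdot(\boldsymbol{\xi}_P - \boldsymbol{\xi}_Q),
\]
and simultaneously expand the target
\[
D_{\psi^*}[P\parallel Q] = \psi^*(\boldsymbol{\xi}_P^*) - \psi^*(\boldsymbol{\xi}_Q^*) - \nabla_{\xi^*}\psi^*(\boldsymbol{\xi}_P^*)\cdot(\boldsymbol{\xi}_Q^* - \boldsymbol{\xi}_P^*).
\]

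Next I would use the key substitutions supplied by the Legendre transformation: $\boldsymbol{\xi}_P^* = \nabla_\xi\psi(\boldsymbol{\xi}_P)$, $\boldsymbol{\xi}_Q^* = \nabla_\xi\psi(\boldsymbol{\xi}_Q)$, and dually $\nabla_{\xi^*}\psi^*(\boldsymbol{\xi}_P^*) = \boldsymbol{\xi}_P$. The crucial identity is the Fenchel–Young equality that follows from Lemma~\ref{lem:3.7}, namely $\psi^*(\boldsymbol{\xi}_P^*) = \boldsymbol{\xi}_P\cdot\boldsymbol{\xi}_P^* - \psi(\boldsymbol{\xi}_P)$ (the maximizer in \eqref{eq:dualfunc} being attained exactly at the point $P$ whose dual coordinate is $\boldsymbol{\xi}_P^*$), and likewise $\psi^*(\boldsymbol{\xi}_Q^*) = \boldsymbol{\xi}_Q\cdot\boldsymbol{\xi}_Q^* - \psi(\boldsymbol{\xi}_Q)$. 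Substituting these into the expansion of $D_{\psi^*}[P\parallel Q]$ gives
\[
D_{\psi^*}[P\parallel Q] = \bigl(\boldsymbol{\xi}_P\cdot\boldsymbol{\xi}_P^* - \psi(\boldsymbol{\xi}_P)\bigr) - \bigl(\boldsymbol{\xi}_Q\cdot\boldsymbol{\xi}_Q^* - \psi(\boldsymbol{\xi}_Q)\bigr) - \boldsymbol{\xi}_P\cdot(\boldsymbol{\xi}_Q^* - \boldsymbol{\xi}_P^*),
\]
and then it is a matter of cancelling the $\boldsymbol{\xi}_P\cdot\boldsymbol{\xi}_P^*$ terms, collecting, and rewriting $\boldsymbol{\xi}_Q^* = \nabla_\xi\psi(\boldsymbol{\xi}_Q)$ to land exactly on the expression for $D_\psi[Q\parallel P]$ above.

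**The main obstacle** I anticipate is not the algebra but making rigorous the step "the maximizer in \eqref{eq:dualfunc} is attained at $P$": the statement of Lemma~\ref{lem:3.7} as printed writes $\arg\max$ where the value $\max$ is meant, so I would need to note that convexity of $\psi$ guarantees the supremum $\sup_{\boldsymbol{\xi}}(\boldsymbol{\xi}\cdot\boldsymbol{\xi}_P^* - \psi(\boldsymbol{\xi}))$ is attained precisely at the $\boldsymbol{\xi}$ satisfying $\nabla_\xi\psi(\boldsymbol{\xi}) = \boldsymbol{\xi}_P^*$, i.e. at $\boldsymbol{\xi} = \boldsymbol{\xi}_P$, and that this gives the Fenchel–Young equality $\psi(\boldsymbol{\xi}_P) + \psi^*(\boldsymbol{\xi}_P^*) = \boldsymbol{\xi}_P\cdot\boldsymbol{\xi}_P^*$. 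Once that identity is in hand the remainder is routine symmetric bookkeeping. A secondary check worth recording is that $D_{\psi^*}$ is well-defined as a Bregman divergence, which requires $\psi^*$ to be convex on $\mathrm{dom}\,\xi^*$ — but this is exactly the fact, used already in the construction of the Legendre transformation, that the Jacobian of $\xi\to\xi^*$ is positive definite, so $\nabla_{\xi^*}^2\psi^* = (\nabla_\xi^2\psi)^{-1} \succ 0$; combined with Lemma~\ref{lem:3.8} this also shows that the Riemannian metric induced by $D_\psi^*$ is the dual metric $g^*$, consistent with Definition~\ref{def:Dual-Riemannian-metric}.
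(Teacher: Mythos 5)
Your plan follows essentially the same route as the paper: the paper's proof likewise first establishes the Hessian identity $\nabla_{\xi^{*}}^{2}\psi^{*}=(\nabla_{\xi}^{2}\psi)^{-1}$ to certify that $\psi^{*}$ is convex (so that $D_{\psi^{*}}$ is a legitimate Bregman divergence), and then expands $D_{\psi^{*}}[P\parallel Q]$ in the dual coordinates and collapses it to $D_{\psi}[Q\parallel P]=D_{\psi}^{*}[P\parallel Q]$; your only additions are to make explicit the Fenchel--Young equality $\psi(\boldsymbol{\xi}_{P})+\psi^{*}(\boldsymbol{\xi}_{P}^{*})=\boldsymbol{\xi}_{P}\cdot\boldsymbol{\xi}_{P}^{*}$, which the paper compresses into a single unannotated equals sign, and to flag the $\arg\max$/$\max$ slip in Lemma~\ref{lem:3.7} --- both worthwhile clarifications. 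One caution before you call the remaining bookkeeping routine: the cancellation only closes if the gradient in equation~\ref{eq:def:Bregman-divergence:1} is read as evaluated at the \emph{second} argument (the standard convention, the one consistent with the mixed representation of Lemma~\ref{lem:3.11}, and the one the paper's own computation silently adopts when it writes $\nabla_{\xi^{*}}\psi^{*}(\boldsymbol{\xi}_{Q}^{*})$); taken literally, the printed definition --- and both of your displayed expansions, which place the gradient at the first argument --- leave a residual $2(\boldsymbol{\xi}_{P}\cdot\boldsymbol{\xi}_{P}^{*}-\boldsymbol{\xi}_{Q}\cdot\boldsymbol{\xi}_{Q}^{*})$ that does not vanish in general, so you should fix the convention explicitly before carrying out the final cancellation.
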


\begin{proof}
Let $G_{\xi}(P_{\xi})$ be the linearisation of $D_{\psi}$ at $P\in\mathcal{P}$,
then:
\begin{eqnarray*}
 &  & G_{\xi}(P_{\xi})\\
 &  & =\nabla_{\xi}^{2}\psi(\boldsymbol{\xi}_{P})=\nabla_{\xi}\boldsymbol{\xi}_{P}^{*}\\
 &  & =(\nabla_{\xi^{*}})^{-1}\boldsymbol{\xi}_{P}=(\nabla_{\xi^{*}}^{2}\psi^{*})^{-1}(\boldsymbol{\xi}_{P}^{*})\\
 &  & =G_{\xi^{*}}^{-1}(P_{\xi^{*}})
\end{eqnarray*}
And therefore:
\begin{equation}
G_{\xi}(P_{\xi})=G_{\xi^{*}}^{-1}(P_{\xi^{*}}),\,\forall P\in\mathcal{P}\label{eq:divergence_dual_hessian}
\end{equation}
Since $D_{\psi}$ is locally linear $G_{\xi}(P_{\xi})$ is positive
definite $\forall P\in\mathcal{P}$. From equation \ref{eq:divergence_dual_hessian}
it therefore follows, that also $G_{\xi^{*}}^{-1}(P_{\xi^{*}})$ is
positive definite $\forall P\in\mathcal{P}$ and since the inverse
matrix of a positive definite matrix is also positive definite it
follows, that $G_{\xi^{*}}(P_{\xi^{*}})$ is a positive definite $\forall P\in\mathcal{P}$.
Furthermore by the definition of the Legendre transformation $G_{\xi^{*}}(P_{\xi^{*}})$
is the Hessian matrix of $\psi^{*}(\boldsymbol{\xi}_{P}^{*})$ and
therefore $\psi^{*}$ is a convex function of $\boldsymbol{\xi}_{P}^{*}\in\mathrm{dom}\xi^{*}$.
Therefore $\psi^{*}$satisfies the requirement for the definition
of a Bregman divergence. Let $P,\,Q\in\mathcal{P}$, then:
\begin{eqnarray*}
 &  & D_{\psi^{*}}[P\parallel Q]\\
 &  & \stackrel{}{=}\psi^{*}(\boldsymbol{\xi}_{P}^{*})-\psi^{*}(\boldsymbol{\xi}_{Q}^{*})-\nabla_{\xi^{*}}\psi^{*}(\boldsymbol{\xi}_{Q}^{*})(\boldsymbol{\xi}_{Q}^{*}-\boldsymbol{\xi}_{P}^{*})\\
 &  & \stackrel{}{=}\psi(\boldsymbol{\xi}_{Q})-\psi(\boldsymbol{\xi}_{P})-\nabla_{\xi}\psi(\boldsymbol{\xi}_{P})(\boldsymbol{\xi}_{P}-\boldsymbol{\xi}_{Q})\\
 &  & \stackrel{\mathrm{def}}{=}D_{\psi}[Q\parallel P]\\
 &  & \stackrel{\mathrm{def}}{=}D_{\psi}^{*}[P\parallel Q]
\end{eqnarray*}
\end{proof}
\begin{prop}
\label{prop:3.3}Let $(X,\,\mathcal{P}_{\xi},\,D_{\psi})$ be a Riemannian
statistical manifold with a Bregman divergence $D_{\psi}$. Then the
dual Riemannian metric $g^{*}$ is induced by the Bregman divergence
$D_{\psi^{*}}$ of the Legendre dual function $\psi^{*}$.
\end{prop}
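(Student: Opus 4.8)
The plan is to chain together Lemma \ref{lem:3.8}, Lemma \ref{lem:3.9} and the definition of the dual Riemannian metric. First I would recall that by Lemma \ref{lem:3.8} the Riemannian metric $g$ induced by the Bregman divergence $D_{\psi}$ with respect to the parametrisation $\xi$ has the coordinate representation $g_{P}=\nabla_{\xi}^{2}\psi(\boldsymbol{\xi}_{P})=G_{\xi}(P_{\xi})$ at every $P\in\mathcal{P}$. By the definition of the dual Riemannian metric, $g_{P}^{*}=g_{P}^{-1}$, so it suffices to show that $P\mapsto G_{\xi}^{-1}(P_{\xi})$ coincides with the Riemannian metric induced by the Bregman divergence $D_{\psi^{*}}$ of the Legendre dual function $\psi^{*}$, taken with respect to the dual parametrisation $\xi^{*}$.

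Second, I would invoke the computation already carried out in the proof of Lemma \ref{lem:3.9}: there it is established that $\psi^{*}$ is convex and sufficiently differentiable on $\mathrm{dom}\,\xi^{*}$, so that $D_{\psi^{*}}$ is a well-defined Bregman divergence, and that the Hessians satisfy $G_{\xi^{*}}(P_{\xi^{*}})=\nabla_{\xi^{*}}^{2}\psi^{*}(\boldsymbol{\xi}_{P}^{*})=(\nabla_{\xi}^{2}\psi(\boldsymbol{\xi}_{P}))^{-1}=G_{\xi}^{-1}(P_{\xi})$ for all $P\in\mathcal{P}$, which is exactly equation \ref{eq:divergence_dual_hessian}. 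Applying Lemma \ref{lem:3.8} now to the Bregman divergence $D_{\psi^{*}}$ in the parametrisation $\xi^{*}$ shows that the Riemannian metric induced by $D_{\psi^{*}}$ is precisely $P\mapsto G_{\xi^{*}}(P_{\xi^{*}})$.

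Combining the two steps yields $g_{P}^{*}=g_{P}^{-1}=G_{\xi}^{-1}(P_{\xi})=G_{\xi^{*}}(P_{\xi^{*}})$ for every $P\in\mathcal{P}$, i.e. the dual metric $g^{*}$ agrees pointwise with the metric induced by $D_{\psi^{*}}$, which is the claim. The only genuinely delicate point is the bookkeeping of the parametrisations: one has to be careful that $D_{\psi^{*}}$ is read off in the dual chart $\xi^{*}$, so that Lemma \ref{lem:3.8} applies verbatim, and that the mutual-inverse relation of the two Jacobians $\nabla_{\xi}\boldsymbol{\xi}_{P}^{*}=\nabla_{\xi}^{2}\psi(\boldsymbol{\xi}_{P})$ and $\nabla_{\xi^{*}}\boldsymbol{\xi}_{P}=\nabla_{\xi^{*}}^{2}\psi^{*}(\boldsymbol{\xi}_{P}^{*})$ is invoked; this is exactly the Legendre duality recorded in Lemma \ref{lem:3.7} and equation \ref{eq:def:legendretransformation:1}. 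Everything else reduces to the routine fact that inversion is an involution on positive definite matrices, whose positive definiteness along the way is already guaranteed by local linearity of $D_{\psi}$ and $D_{\psi^{*}}$.
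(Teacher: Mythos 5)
Your proposal is correct and follows essentially the same route as the paper: both reduce the claim to the chain $g_{P}^{*}=g_{P}^{-1}=G_{\xi}^{-1}(P_{\xi})=G_{\xi^{*}}(P_{\xi^{*}})=\nabla_{\xi^{*}}^{2}\psi^{*}(\boldsymbol{\xi}_{P}^{*})$ via the definition of the dual metric, equation \ref{eq:divergence_dual_hessian} from the proof of \prettyref{lem:3.9}, and \prettyref{lem:3.8} applied to $D_{\psi^{*}}$ in the chart $\xi^{*}$. Your version is somewhat more explicit about the convexity of $\psi^{*}$ and the parametrisation bookkeeping, but the argument is the same.
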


\begin{proof}
By applying the definition for the dual Riemannian metric for $P\in\mathcal{P}$
it follows, that::
\[
g_{P}^{*}\stackrel{\mathrm{def}}{=}g_{P}^{-1}\stackrel{\mathrm{def}}{=}G_{\xi}^{-1}(P_{\xi})\stackrel{\ref{eq:divergence_dual_hessian}}{=}G_{\xi}^{*}(P_{\xi})\stackrel{\ref{eq:lem:3.8:1}}{=}\nabla_{\xi^{*}}^{2}\psi^{*}(\boldsymbol{\xi}_{P}^{*})
\]
This is the linearisation of the Bregman divergence $D_{\psi^{*}}$.
\end{proof}

\subsection{Dually flat statistical manifolds}
\begin{defn*}[Dually flat manifold]
\label{def:Dually-flat-manifold} \emph{Let $(M,\,g)$ be a Riemannian
manifold. Then $(M,\,g)$ is termed a dually flat (Riemannian) manifold,
iff:}
\begin{enumerate}
\item[(1)] \emph{ $g$ is a flat Riemannian metric of $M$}
\item[(2)] \emph{$g^{*}$ is a flat Riemannian metric of $M$}
\end{enumerate}
\end{defn*}
\begin{example*}[Euclidean space]
\label{exa:Euclidean-space} An example for a dually flat manifold
is given by Euclidean spaces. Let $\xi$ be the Cartesian coordinates
of an Euclidean space $E$, then $\xi$ is an affine parametrisation
of $E$ and for $P,\,Q\in E$ the Euclidean metric of $E$ is induced
by the Euclidean divergence $D[P\parallel Q]=\frac{1}{2}|\boldsymbol{\xi}_{P}-\boldsymbol{\xi}_{Q}|^{2}$.
In this case the divergence is symmetric and therefore the Euclidean
metric is self-dual Since $E$ is flat with regard to the Euclidean
metric $E$ is also flat with regard to the dual Euclidean metric
and therefore $E$ is a dually flat manifold. 
\end{example*}
\begin{lem}
\label{lem:3.10}Let $(X,\,\mathcal{P}_{\xi},\,D_{\psi})$ be a Riemannian
statistical manifold with a Bregman divergence $D_{\psi}$. Then $(X,\,\mathcal{P_{\xi}},\,D_{\psi})$
is a dually flat statistical manifold, iff:
\begin{enumerate}
\item[(1)] \emph{}The $\xi$-affine geodesics are flat with regard to the Riemannian
metric, induced by $D_{\psi}$
\item[(2)] \emph{}The $\xi^{*}$-affine geodesics are flat with regard to the
Riemannian metric, induced by $D_{\psi^{*}}$
\end{enumerate}
\end{lem}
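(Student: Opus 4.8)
The plan is to reduce the biconditional to a single core equivalence about Bregman structures and then apply it twice, once to the pair $(\xi,\psi)$ and once to its Legendre dual. By the definition of a dually flat manifold, the assertion that $(X,\,\mathcal{P}_{\xi},\,D_{\psi})$ is dually flat means precisely that $g$ and $g^{*}$ are both flat Riemannian metrics of $M=(X,\,\mathcal{P})$. By Proposition~\ref{prop:3.3} the dual metric $g^{*}$ is induced by the Bregman divergence $D_{\psi^{*}}$ of the Legendre dual function, and by Lemma~\ref{lem:3.8} both metrics are Hessian forms in their natural parametrisations, $g_{P}=\nabla_{\xi}^{2}\psi(\boldsymbol{\xi}_{P})$ and $g_{P}^{*}=\nabla_{\xi^{*}}^{2}\psi^{*}(\boldsymbol{\xi}_{P}^{*})$. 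Hence it suffices to establish the single statement that \emph{the metric induced by a Bregman divergence $D_{\psi}$ is a flat Riemannian metric if and only if the $\xi$-affine geodesics are flat with regard to it}, i.e.\ iff $\xi$ is an affine parametrisation for $(X,\,\mathcal{P}_{\xi},\,D_{\psi})$. Condition~(1) of the lemma is exactly this statement for $D_{\psi}$, and condition~(2) is the same statement applied to $D_{\psi^{*}}$ with the parametrisation $\xi^{*}$; the latter application is legitimate, since the proof of Lemma~\ref{lem:3.9} exhibits $\psi^{*}$ as a convex function on $\mathrm{dom}\,\xi^{*}$ and $\xi^{*}$ is identifiable, so $D_{\psi^{*}}$ is genuinely a Bregman divergence with respect to $\xi^{*}$.

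For the core equivalence I would argue the two directions separately. The implication that $\xi$ being affine forces $g$ to be flat is the easier one: if every $\xi$-affine curve is a geodesic of $g$, then substituting a straight segment $\gamma_{\theta}^{k}(t)=a^{k}t+b^{k}$ into the local geodesic equation~\ref{eq:geodesic-equation} gives $\ddot{\gamma}_{\theta}^{k}=0$ and hence $\sum_{i,j}\Gamma_{ij}^{k}a^{i}a^{j}=0$ for every direction $\boldsymbol{a}$ at every point; since $\Gamma_{ij}^{k}$ is symmetric in $i,j$, polarisation yields $\Gamma_{ij}^{k}\equiv 0$ in the $\xi$-coordinates, so the curvature tensor $R$ of the Levi--Civita connection vanishes identically and $g$ is flat. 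For the converse one exploits the special structure of a Hessian metric: writing $g_{ij}=\partial_{i}\partial_{j}\psi$ the Christoffel symbols collapse to $\Gamma_{ij}^{k}=\tfrac{1}{2}g^{kl}\psi_{ijl}$ with $\psi_{ijl}$ totally symmetric, so the flat connection $\nabla$ defined by $\nabla_{\partial_{i}}\partial_{j}=0$ in the $\xi$-coordinates is torsion free and, together with the connection $\nabla^{*}$ that is affine in $\xi^{*}$, dual with respect to $g$. Flatness of $g$ then has to be parlayed, via uniqueness of the affine (zero-Christoffel) parametrisation, into the statement that the $\xi$-coordinates are --- up to an affine change of coordinates --- a chart in which the geodesics are straight, i.e.\ that $\xi$ is an affine parametrisation and the $\xi$-affine geodesics are precisely the geodesics of $g$.

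The dual half then follows without further work. The Legendre transformation $(\xi,\,\psi)\to(\xi^{*},\,\psi^{*})$ of Lemma~\ref{lem:3.7} is involutive, by equations~\ref{eq:def:dualparametrisation:1} and~\ref{eq:def:legendretransformation:1}, and by Lemma~\ref{lem:3.9} together with Proposition~\ref{prop:3.3} the tuple $(\xi^{*},\,\psi^{*},\,D_{\psi^{*}},\,g^{*})$ stands to the dual data exactly as $(\xi,\,\psi,\,D_{\psi},\,g)$ stands to the primal data. Applying the core equivalence to $D_{\psi^{*}}$ therefore yields that $g^{*}$ is a flat Riemannian metric iff the $\xi^{*}$-affine geodesics are flat with regard to the metric induced by $D_{\psi^{*}}$, which is condition~(2). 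Conjoining the two equivalences with the definition of a dually flat manifold gives the lemma.

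I expect the main obstacle to be the converse direction of the core equivalence: converting abstract flatness of the Hessian metric $g=\nabla_{\xi}^{2}\psi$ into the concrete claim that the \emph{given} parametrisation $\xi$ is affine. The delicate point is that the Levi--Civita connection of $g$ and the canonical flat connection $\nabla$ for which $\xi$ is affine are a priori different connections, so one must either read ``$g$ is a flat Riemannian metric'' as ``$g$ admits \emph{some} affine parametrisation'' and then show that for a Bregman divergence this forces the canonical candidate $\xi$ to be affine (via uniqueness of the dual-flat affine structure attached to $\psi$), or invoke the dual-connection formalism to identify the relevant flat connection directly. Making that uniqueness argument precise --- and fixing the intended reading of ``flat Riemannian metric'' in the definition of a dually flat manifold --- is where the real work lies; the remainder is bookkeeping with Lemma~\ref{lem:3.8}, Lemma~\ref{lem:3.9} and Proposition~\ref{prop:3.3}.
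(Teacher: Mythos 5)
Your overall decomposition --- reduce the biconditional to a single equivalence between ``$\xi$ is an affine parametrisation for the metric induced by $D_{\psi}$'' and ``that metric is flat'', then apply it to $(\xi,\,\psi)$ and to the Legendre dual $(\xi^{*},\,\psi^{*})$ via Lemma \ref{lem:3.9} and Proposition \ref{prop:3.3} --- matches the skeleton of the paper's argument, and your polarisation argument for the easy direction (straight $\xi$-lines being geodesics forces $\Gamma_{ij}^{k}\equiv 0$, hence $R\equiv 0$) is a legitimate filling-in of a step the paper merely asserts. The problem is the converse of your ``core equivalence'', which you flag but do not close, and which as you have stated it is actually false if ``flat Riemannian metric'' is read as vanishing Levi--Civita curvature. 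A Hessian metric can be Riemann-flat without the given coordinates being affine: for $\psi(\boldsymbol{\xi})=e^{\xi^{1}}+e^{\xi^{2}}$ the metric $\mathrm{diag}(e^{\xi^{1}},\,e^{\xi^{2}})$ becomes Euclidean under $u^{i}=2e^{\xi^{i}/2}$, yet the $\xi$-affine lines are not its geodesics. Conversely, the standard dually flat manifolds of information geometry (e.g.\ the Gaussian family with the Fisher metric) have nonvanishing Riemannian curvature, so no amount of work will derive condition (1) from Levi--Civita flatness of $g$. The equivalence only holds if ``flat'' is interpreted relative to the pair of dual affine connections attached to $\xi$ and $\xi^{*}$, which is the reading you gesture at in your final paragraph but never commit to.

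The paper does not attempt the derivation you are missing: its converse direction is obtained almost definitionally from the standing convention (see the Remark to the definition of an affine parametrisation) that the notation $(X,\,\mathcal{P}_{\xi},\,D_{\psi})$ already implicates that $\xi$ is an affine parametrisation, so the geodesics are $\xi$-affine by fiat; the only substantive content left is the identification $D_{\psi}^{*}=D_{\psi^{*}}$ and $\tilde{g}=g^{*}$ from Lemma \ref{lem:3.9} and Proposition \ref{prop:3.3}, which you also use. To repair your write-up you should either adopt that convention explicitly (at the cost of making half of the lemma tautological, as the paper in effect does), or restate your core equivalence in terms of the flatness of the dual connections $\nabla$ and $\nabla^{*}$ rather than of the Levi--Civita connection of $g$, where the zero-Christoffel characterisation you invoke is correct by construction.
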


\begin{proof}
Since the Legendre transformation generally does not preserve the
Riemannian metric, the flatness of $(X,\,\mathcal{P},\,D_{\psi})$
and $(X,\,\mathcal{P},\,D_{\psi^{*}})$ are indeed independent properties.
Let the $\xi$-affine geodesics be flat with regard to the Riemannian
metric $g$, induced by $D_{\psi}$, then also $(X,\,\mathcal{P})$
is flat w.r.t. $g$. Let further the $\xi^{*}$-affine geodesics be
flat with regard to the Riemannian metric $\tilde{g}$, induced by
$D_{\psi^{*}}$, then by \prettyref{prop:3.3} it follows, that $\tilde{g}=g^{*}$
and therefore $g^{*}$ is a flat Riemannian metric of $(X,\,\mathcal{P})$.
Conversely let $(X,\,\mathcal{P_{\xi}},\,D_{\psi})$ be a dually flat
statistical manifold with a Bregman divergence $D_{\psi}$. Then by
convention $\xi$ is an affine parametrisation of $(X,\,\mathcal{P},\,D_{\psi})$
and the geodesics in $(X,\,\mathcal{P},\,D_{\psi})$ are $\xi$-affine
geodesics and flat with regard to the Riemannian metric, induced by
$D_{\psi}$. Furthermore the dual Riemannian metric $g^{*}$ induced
by $D_{\psi}^{*}$ is a flat Riemannian metric of $(X,\,\mathcal{P})$
and since $D_{\psi}$ is a Bregman divergence it follows that $D_{\psi}^{*}=D_{\psi^{*}}$.
Then the dual parametrisation $\xi^{*}$is an affine parametrisation
of $(X,\,\mathcal{P},\,D_{\psi}^{*})$ and the geodesics in $(X,\,\mathcal{P},\,D_{\psi})$
are $\xi^{*}$-affine geodesics and flat with regard to the Riemannian
metric, induced by $D_{\psi^{*}}$ .
\end{proof}
\begin{defn*}[Dual geodesic projection]
\label{def:Dual-geodesic-projection} \emph{Let $(X,\,\mathcal{P},\,D)$
be a Riemannian statistical manifold with a smooth submanifold $(X,\,\mathcal{Q})$.
Then a mapping $\pi^{*}:\mathcal{P}\longrightarrow\mathcal{Q}$ is
termed a }\textbf{\emph{dual geodesic projection}}\emph{, iff any
point $P\in\mathcal{P}$ is mapped to a point $\pi^{*}(P)\in\mathcal{Q}$,
that minimizes the distance $d(P,\,\pi^{*}(P))$ w.r.t. the dual Riemannian
metric, which is induced by $D^{*}$.}
\end{defn*}
In the case of a dually flat statistical manifold, the dual affine
structure induces a correspondence relationship between the Riemannian
metrices, induced by $D$ and $D^{*}$.
\begin{lem}
\label{lem:3.11}Let $(X,\,\mathcal{P}_{\xi},\,D_{\psi})$ be a Riemannian
statistical manifold with a Bregman divergence $D_{\psi}$. Then $D_{\psi}$
has a mixed representation in the parametrisations $\xi$ and $\xi^{*}$,
which is given by: 
\begin{equation}
D_{\psi}[P\parallel Q]=\psi(\boldsymbol{\xi}_{P})+\psi^{*}(\boldsymbol{\xi}_{Q}^{*})-\boldsymbol{\xi}_{P}\cdot\boldsymbol{\xi}_{Q}^{*}\label{eq:divergence_bregman_mixed}
\end{equation}
\end{lem}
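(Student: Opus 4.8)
The plan is to prove the mixed representation \eqref{eq:divergence_bregman_mixed} by starting from the definition of the Bregman divergence \eqref{eq:def:Bregman-divergence:1} and substituting in the defining identities of the Legendre transformation. First I would expand the dot product $\nabla_{\xi}\psi(\boldsymbol{\xi}_{P})\cdot(\boldsymbol{\xi}_{Q}-\boldsymbol{\xi}_{P})$ in the definition of $D_{\psi}[P\parallel Q]$ and use the Dual parametrisation definition \eqref{eq:def:dualparametrisation:1}, namely $\boldsymbol{\xi}_{P}^{*}=\nabla_{\xi}\psi(\boldsymbol{\xi}_{P})$, to rewrite it as $\boldsymbol{\xi}_{P}^{*}\cdot(\boldsymbol{\xi}_{Q}-\boldsymbol{\xi}_{P})=\boldsymbol{\xi}_{P}^{*}\cdot\boldsymbol{\xi}_{Q}-\boldsymbol{\xi}_{P}^{*}\cdot\boldsymbol{\xi}_{P}$. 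This turns the divergence into $D_{\psi}[P\parallel Q]=\psi(\boldsymbol{\xi}_{P})-\psi(\boldsymbol{\xi}_{Q})-\boldsymbol{\xi}_{P}^{*}\cdot\boldsymbol{\xi}_{Q}+\boldsymbol{\xi}_{P}^{*}\cdot\boldsymbol{\xi}_{P}$.

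The second step is to eliminate the term $-\psi(\boldsymbol{\xi}_{Q})+\boldsymbol{\xi}_{P}^{*}\cdot\boldsymbol{\xi}_{P}$ in favour of the dual potential. For this I would invoke the Young–Fenchel equality that is implicit in Lemma~\ref{lem:3.7}: since $\psi^{*}(\boldsymbol{\xi}_{Q}^{*})=\boldsymbol{\xi}_{Q}\cdot\boldsymbol{\xi}_{Q}^{*}-\psi(\boldsymbol{\xi}_{Q})$ at the stationary point (which is exactly what \eqref{eq:dualfunc} together with \eqref{eq:def:dualparametrisation:1} gives), one can solve for $\psi(\boldsymbol{\xi}_{Q})=\boldsymbol{\xi}_{Q}\cdot\boldsymbol{\xi}_{Q}^{*}-\psi^{*}(\boldsymbol{\xi}_{Q}^{*})$, and similarly $\psi(\boldsymbol{\xi}_{P})=\boldsymbol{\xi}_{P}\cdot\boldsymbol{\xi}_{P}^{*}-\psi^{*}(\boldsymbol{\xi}_{P}^{*})$, though in fact only the identity at $Q$ is needed. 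Substituting $\psi(\boldsymbol{\xi}_{Q})=\boldsymbol{\xi}_{Q}\cdot\boldsymbol{\xi}_{Q}^{*}-\psi^{*}(\boldsymbol{\xi}_{Q}^{*})$ into the expression from the first step yields
\[
D_{\psi}[P\parallel Q]=\psi(\boldsymbol{\xi}_{P})-\boldsymbol{\xi}_{Q}\cdot\boldsymbol{\xi}_{Q}^{*}+\psi^{*}(\boldsymbol{\xi}_{Q}^{*})-\boldsymbol{\xi}_{P}^{*}\cdot\boldsymbol{\xi}_{Q}+\boldsymbol{\xi}_{P}^{*}\cdot\boldsymbol{\xi}_{P}.
\]
Now I would need a cancellation of the cross terms: the remaining terms $-\boldsymbol{\xi}_{Q}\cdot\boldsymbol{\xi}_{Q}^{*}-\boldsymbol{\xi}_{P}^{*}\cdot\boldsymbol{\xi}_{Q}+\boldsymbol{\xi}_{P}^{*}\cdot\boldsymbol{\xi}_{P}$ must reduce to $-\boldsymbol{\xi}_{P}\cdot\boldsymbol{\xi}_{Q}^{*}$. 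This does not hold termwise, so the cleaner route is to instead substitute $\psi(\boldsymbol{\xi}_{P})=\boldsymbol{\xi}_{P}\cdot\boldsymbol{\xi}_{P}^{*}-\psi^{*}(\boldsymbol{\xi}_{P}^{*})$ as well and check that the symmetric combination collapses; alternatively, expand directly using $\boldsymbol{\xi}_{P}^{*}\cdot\boldsymbol{\xi}_{P}-\psi(\boldsymbol{\xi}_{P})=\psi^{*}(\boldsymbol{\xi}_{P}^{*})$ only on the pair of terms $\psi(\boldsymbol{\xi}_{P})$ does not appear — so the correct bookkeeping is to replace $-\psi(\boldsymbol{\xi}_{Q})$ and note $\psi(\boldsymbol{\xi}_{P})$ stays, then regroup as $\bigl(\boldsymbol{\xi}_{P}^{*}\cdot\boldsymbol{\xi}_{P}-\psi^{*}(\boldsymbol{\xi}_{P}^{*})\bigr)$ is \emph{not} what we want; rather we keep $\psi(\boldsymbol{\xi}_{P})$, keep $\psi^{*}(\boldsymbol{\xi}_{Q}^{*})$, and the cross terms $\boldsymbol{\xi}_{P}^{*}\cdot\boldsymbol{\xi}_{P}-\boldsymbol{\xi}_{P}^{*}\cdot\boldsymbol{\xi}_{Q}-\boldsymbol{\xi}_{Q}\cdot\boldsymbol{\xi}_{Q}^{*}$ must equal $-\boldsymbol{\xi}_{P}\cdot\boldsymbol{\xi}_{Q}^{*}$.

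The main obstacle is therefore precisely this cross-term algebra, and resolving it requires being careful about which stationarity identity to use. The right choice is: apply \eqref{eq:dualfunc}/\eqref{eq:def:legendretransformation:1} in the form $\psi^{*}(\boldsymbol{\xi}_{Q}^{*})+\psi(\boldsymbol{\xi}_{Q})=\boldsymbol{\xi}_{Q}\cdot\boldsymbol{\xi}_{Q}^{*}$ to replace $\psi(\boldsymbol{\xi}_{Q})$, and \emph{not} touch $\psi(\boldsymbol{\xi}_{P})$; then use $\boldsymbol{\xi}_{P}^{*}=\nabla_{\xi}\psi(\boldsymbol{\xi}_{P})$ once more to recognize $\boldsymbol{\xi}_{P}^{*}\cdot\boldsymbol{\xi}_{P}$ is \emph{not} eliminated but instead the original expansion should have kept the grouping $-\boldsymbol{\xi}_{P}^{*}\cdot(\boldsymbol{\xi}_{Q}-\boldsymbol{\xi}_{P})$ intact as $-\boldsymbol{\xi}_{P}^{*}\cdot\boldsymbol{\xi}_{Q}+\boldsymbol{\xi}_{P}^{*}\cdot\boldsymbol{\xi}_{P}$, whereupon $D_{\psi}[P\parallel Q]=\psi(\boldsymbol{\xi}_{P})+\psi^{*}(\boldsymbol{\xi}_{Q}^{*})-\boldsymbol{\xi}_{Q}\cdot\boldsymbol{\xi}_{Q}^{*}-\boldsymbol{\xi}_{P}^{*}\cdot\boldsymbol{\xi}_{Q}+\boldsymbol{\xi}_{P}^{*}\cdot\boldsymbol{\xi}_{P}$; finally invoking $\psi(\boldsymbol{\xi}_{P})+\boldsymbol{\xi}_{P}^{*}\cdot\boldsymbol{\xi}_{P}$ is wrong-signed, so one instead writes $\psi(\boldsymbol{\xi}_{P})=\boldsymbol{\xi}_{P}\cdot\boldsymbol{\xi}_{P}^{*}-\psi^{*}(\boldsymbol{\xi}_{P}^{*})$ to see a telescoping — and then the $P$-dual terms cancel against the claimed formula only if one re-expresses back. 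In short, the disciplined path is: expand \eqref{eq:def:Bregman-divergence:1}, apply the Young equality at $Q$ exactly once, and verify the residual linear-in-parameters terms sum to $-\boldsymbol{\xi}_{P}\cdot\boldsymbol{\xi}_{Q}^{*}$ using $\boldsymbol{\xi}_{P}^{*}=\nabla_\xi\psi(\boldsymbol{\xi}_P)$; I expect this to be a two-line computation once the substitution is organized correctly, with the only real subtlety being to avoid introducing $\psi^{*}(\boldsymbol{\xi}_{P}^{*})$, which does not appear in \eqref{eq:divergence_bregman_mixed}.
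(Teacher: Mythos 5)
Your proposal does not arrive at a proof: it ends inside the unresolved ``cross-term algebra'', and that impasse is real rather than a bookkeeping slip. Starting from \eqref{eq:def:Bregman-divergence:1} exactly as printed, $D_{\psi}[P\parallel Q]=\psi(\boldsymbol{\xi}_{P})-\psi(\boldsymbol{\xi}_{Q})-\boldsymbol{\xi}_{P}^{*}\cdot(\boldsymbol{\xi}_{Q}-\boldsymbol{\xi}_{P})$, and applying the Young--Fenchel equality $\psi(\boldsymbol{\xi}_{Q})+\psi^{*}(\boldsymbol{\xi}_{Q}^{*})=\boldsymbol{\xi}_{Q}\cdot\boldsymbol{\xi}_{Q}^{*}$ at $Q$ leaves exactly the residue you identified, $\boldsymbol{\xi}_{P}^{*}\cdot\boldsymbol{\xi}_{P}-\boldsymbol{\xi}_{P}^{*}\cdot\boldsymbol{\xi}_{Q}-\boldsymbol{\xi}_{Q}\cdot\boldsymbol{\xi}_{Q}^{*}$, which is \emph{not} equal to $-\boldsymbol{\xi}_{P}\cdot\boldsymbol{\xi}_{Q}^{*}$ for nonquadratic $\psi$. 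A one-dimensional check with $\psi(\xi)=e^{\xi}$, $\boldsymbol{\xi}_{P}=0$, $\boldsymbol{\xi}_{Q}=1$ gives $-e$ from the printed definition but $1$ from the right-hand side of \eqref{eq:divergence_bregman_mixed}. The identity \eqref{eq:divergence_bregman_mixed} is the canonical mixed form for the convention in which the gradient is evaluated at the \emph{second} argument, $D_{\psi}[P\parallel Q]=\psi(\boldsymbol{\xi}_{P})-\psi(\boldsymbol{\xi}_{Q})-\nabla_{\xi}\psi(\boldsymbol{\xi}_{Q})\cdot(\boldsymbol{\xi}_{P}-\boldsymbol{\xi}_{Q})$; under that convention your own two opening moves already finish the proof, since $-\nabla_{\xi}\psi(\boldsymbol{\xi}_{Q})\cdot(\boldsymbol{\xi}_{P}-\boldsymbol{\xi}_{Q})=-\boldsymbol{\xi}_{Q}^{*}\cdot\boldsymbol{\xi}_{P}+\boldsymbol{\xi}_{Q}^{*}\cdot\boldsymbol{\xi}_{Q}$ and the term $\boldsymbol{\xi}_{Q}^{*}\cdot\boldsymbol{\xi}_{Q}-\psi(\boldsymbol{\xi}_{Q})$ is precisely $\psi^{*}(\boldsymbol{\xi}_{Q}^{*})$. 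So the missing idea is not a cleverer regrouping but the recognition that the statement is incompatible with Definition \eqref{eq:def:Bregman-divergence:1} as printed: you needed either to flag and repair the convention and then run your two-line computation, or to abandon the direct expansion.

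For comparison, the paper takes the second route: it invokes \prettyref{lem:3.9} to write $D_{\psi}[P\parallel Q]=D_{\psi^{*}}[Q\parallel P]$, expands the dual Bregman divergence in the $\xi^{*}$-coordinates, and uses $\nabla_{\xi^{*}}\psi^{*}(\boldsymbol{\xi}_{P}^{*})=\boldsymbol{\xi}_{P}$ together with the Young equality at $P$. Note, however, that this expansion also evaluates the gradient at the second argument of $D_{\psi^{*}}[Q\parallel P]$ rather than the first, i.e.\ the paper's own proof silently uses the standard convention rather than \eqref{eq:def:Bregman-divergence:1} as written, so the inconsistency you ran into is inherited from the source. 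Your instinct to apply Young exactly once at $Q$ and to avoid introducing $\psi^{*}(\boldsymbol{\xi}_{P}^{*})$ was the right one for the direct route; the failure point was upstream, in the definition you were handed, and a complete answer had to say so explicitly rather than defer the cancellation to ``correct organisation''.
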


\begin{proof}
By applying the definition of the dual divergence and \prettyref{lem:3.9}
it follows, that:
\[
D_{\psi}[P\parallel Q]\stackrel{\ref{eq:divergence_dual_divergence}}{=}D_{\psi^{*}}[Q\parallel P]
\]
The right side of the equation is calculated by the definition of
the Bregman divergence and the Legendre dual function, such that:
\begin{eqnarray*}
 &  & D_{\psi^{*}}[Q\parallel P]\\
 &  & \stackrel{\ref{eq:def:Bregman-divergence:1}}{=}\psi^{*}(\boldsymbol{\xi}_{Q}^{*})-\psi^{*}(\boldsymbol{\xi}_{P}^{*})-\nabla\psi^{*}(\boldsymbol{\xi}_{P}^{*})(\boldsymbol{\xi}_{P}^{*}-\boldsymbol{\xi}_{Q}^{*})\\
 &  & \stackrel{}{=}\psi(\boldsymbol{\xi}_{P})+\psi^{*}(\boldsymbol{\xi}_{Q}^{*})-\boldsymbol{\xi}_{P}\cdot\boldsymbol{\xi}_{Q}^{*}
\end{eqnarray*}
 
\end{proof}
\begin{thm}[\emph{Amari Pythagorean Theorem}]
\emph{\label{thm:Pythagorean-theorem}} Let $(X,\,\mathcal{P_{\xi}},\,D_{\psi})$
be a dually flat statistical manifold, which is given by a Bregman
divergence $D_{\psi}$ and let $P,\,Q,\,R\in\mathcal{P}$ be an orthogonal
triangle in the sense, that the $\xi^{*}$-affine geodesic $\gamma_{P,Q}^{*}$
from $P$ to $Q$ is orthogonal to the $\xi$-affine geodesic $\gamma_{Q,R}$
from $Q$ to $R$, then:
\begin{equation}
D_{\psi}[P\parallel R]=D_{\psi}[P\parallel Q]+D_{\psi}[Q\parallel R]\label{eq:thm:Pythagorean_theorem:1}
\end{equation}
\begin{figure}[h]
\begin{centering}
\def\svgwidth{\columnwidth} 
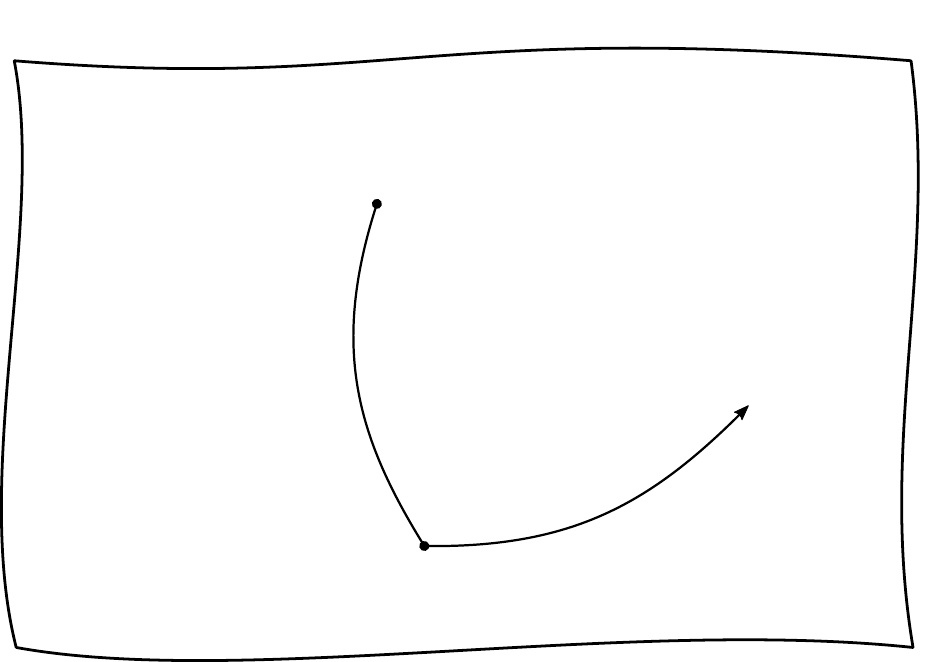
\par\end{centering}
\caption{\label{fig:thm:Pythagorean-theorem}Pythagorean theorem for dually
flat manifolds}
\end{figure}
\end{thm}

\begin{proof}
The $\xi^{*}$-affine geodesic $\gamma_{P,Q}^{*}:[0,\,1]\to(X,\,\mathcal{P})$,
with $\gamma_{P,Q}^{*}(0)=P$ and $\gamma_{P,Q}^{*}(1)=Q$ is parametrized
by:
\[
\boldsymbol{\xi}_{P,Q}^{*}(t)=t\boldsymbol{\xi}_{Q}^{*}+(1-t)\boldsymbol{\xi}_{P}^{*},\,t\in[0,\,1]
\]
and the $\xi$-affine geodesic $\gamma_{Q,R}:[0,\,1]\to(X,\,\mathcal{P})$,
with $\gamma_{Q,R}(0)=Q$ and $\gamma_{Q,R}(1)=R$ by:
\[
\boldsymbol{\xi}_{Q,R}(t)=t\boldsymbol{\xi}_{R}+(1-t)\boldsymbol{\xi}_{Q},\,t\in[0,\,1]
\]
Let $\langle\cdot,\,\cdot\rangle_{g}$ denote the local scalar product,
which is induced by the Bregman divergence $D_{\psi}$. By applying
the definition of the Bregman divergence, the local scalar product
at the point $Q$ is given by:
\begin{eqnarray*}
 &  & \langle\frac{\mathrm{d}}{\mathrm{d}t}\gamma_{P,Q}^{*}(t)|_{t=1},\,\frac{\mathrm{d}}{\mathrm{d}t}\gamma_{Q,R}(t)|_{t=0}\rangle_{g}\\
 &  & \stackrel{\ref{eq:def:Bregman-divergence:1}}{=}(\boldsymbol{\xi}_{Q}^{*}-\boldsymbol{\xi}_{P}^{*})\cdot(\boldsymbol{\xi}_{R}-\boldsymbol{\xi}_{Q})\\
 &  & \stackrel{\ref{eq:divergence_bregman_mixed}}{=}\boldsymbol{\xi}_{Q}^{*}\cdot\boldsymbol{\xi}_{R}-\boldsymbol{\xi}_{P}^{*}\cdot\boldsymbol{\xi}_{R}+\boldsymbol{\xi}_{P}^{*}\cdot\boldsymbol{\xi}_{Q}-\psi(\boldsymbol{\xi}_{Q})-\psi^{*}(\boldsymbol{\xi}_{Q}^{*})\\
 &  & \stackrel{\ref{eq:divergence_bregman_mixed}}{=}D_{\psi}[P\parallel Q]+D_{\psi}[Q\parallel R]-D_{\psi}[P\parallel R]
\end{eqnarray*}
Since $\gamma_{P,Q}^{*}$ and $\gamma_{Q,R}$ are required to be orthogonal
in the point $Q$, the left side of the equation equals zero and therefore
it follows, that:
\[
D_{\psi}[P\parallel Q]+D_{\psi}[Q\parallel R]-D_{\psi}[P\parallel R]=0
\]
\end{proof}
Due to the generic asymmetry of Bregman divergences the generalized
Pythagorean theorem has a corresponding dual theorem, which mutatis
mutandis is given by:
\begin{equation}
D_{\psi}^{*}[P\parallel R]=D_{\psi}^{*}[P\parallel Q]+D_{\psi}^{*}[Q\parallel R]\label{eq:dual_flat_pythagoras_dual}
\end{equation}
If $\psi$ is chosen, such that $D_{\psi}$ is symmetric, the induced
Riemannian metric of $D_{\psi^{*}}$ is identical to that of $D_{\psi}$,
since:
\[
D_{\psi}[P\parallel Q]=D_{\psi}[Q\parallel P]=D_{\psi^{*}}[P\parallel Q]
\]
In this case the generalized Pythagorean theorem and its dual corresponding
are equivalent and the induced Riemannian metric is self-dual.
\begin{defn*}[Affine projection]
\label{def:affine_projection} \emph{Let $(X,\,\mathcal{P_{\xi}},\,D)$
be a Riemannian statistical manifold and $(X,\,\mathcal{Q})$ a smooth
submanifold. Then a projection $\pi_{\xi}^{\perp}:\mathcal{P}\to\mathcal{Q}$
is termed an }\textbf{\emph{$\xi$-affine projection}}\emph{ from
}$(X,\,\mathcal{P},\,D)$\emph{ to $(X,\,\mathcal{Q},\,D)$, iff for
any $P\in\mathcal{P}$ the $\xi$-affine geodesics from $P$ to $\pi_{\xi}^{\perp}(P)$
are orthogonal to $\mathcal{Q}.$}
\end{defn*}
\begin{lem}
\label{lem:3.12}Let $(X,\,\mathcal{P}_{\xi},\,D_{\psi})$ be a dually
flat statistical manifold with a smooth submanifold $(X,\,\mathcal{Q})$.
Then there exists an \emph{$\xi$-}affine projection as well as an
\emph{$\xi^{*}$-}affine projection from $(X,\,\mathcal{P},\,D)$\emph{
to $(X,\,\mathcal{Q})$}.
\end{lem}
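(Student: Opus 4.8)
The plan is to realise each projection as a minimiser of a suitably oriented Bregman divergence and to recover the defining geodesic orthogonality from the first-order optimality condition, which — via the Bregman and mixed representations — is exactly the infinitesimal form of the Amari Pythagorean theorem. Throughout I would, following the convention already fixed for geodesic projections, restrict to the path components of $\mathcal{P}$ and $\mathcal{Q}$ that are path-connected to the point $P$ under consideration, so that the $\xi$- and $\xi^{*}$-affine geodesics joining $P$ to points of $\mathcal{Q}$ are defined. I would also record once the mixed-frame identity already used in the proof of \prettyref{thm:Pythagorean-theorem}: if $w\in T_{Q}\mathcal{M}$ is given by its $\xi$-representation and $v\in T_{Q}\mathcal{M}$ by its $\xi^{*}$-representation, then $\langle w,v\rangle_{g}$ is the ordinary dot product of those two representations.

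For the $\xi$-affine projection, fix $P\in\mathcal{P}$ and set $f(Q)\coloneqq D_{\psi}[P\parallel Q]$ on $\mathcal{Q}$. By the mixed representation \ref{eq:divergence_bregman_mixed}, $f(Q)=\psi(\boldsymbol{\xi}_{P})+\psi^{*}(\boldsymbol{\xi}_{Q}^{*})-\boldsymbol{\xi}_{P}\cdot\boldsymbol{\xi}_{Q}^{*}$, so in the dual parametrisation $f$ is, up to an additive constant, the restriction to $\mathcal{Q}$ of the strictly convex function $\boldsymbol{\xi}^{*}\mapsto\psi^{*}(\boldsymbol{\xi}^{*})-\boldsymbol{\xi}_{P}\cdot\boldsymbol{\xi}^{*}$ — strict convexity being the positive definiteness of $\nabla_{\xi^{*}}^{2}\psi^{*}=G_{\xi^{*}}$ established in the proof of \prettyref{lem:3.9}. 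Assuming $f$ attains its infimum at some $Q_{0}\in\mathcal{Q}$, I would differentiate $f$ along an arbitrary smooth curve $c$ in $\mathcal{Q}$ with $c(0)=Q_{0}$ and use $\nabla_{\xi^{*}}\psi^{*}(\boldsymbol{\xi}_{Q}^{*})=\boldsymbol{\xi}_{Q}$ (equation \ref{eq:def:legendretransformation:1}) to obtain $(\boldsymbol{\xi}_{Q_{0}}-\boldsymbol{\xi}_{P})\cdot\dot{\boldsymbol{\xi}}_{c}^{*}(0)=0$. Here $\boldsymbol{\xi}_{Q_{0}}-\boldsymbol{\xi}_{P}$ is the $\xi$-velocity at $Q_{0}$ of the $\xi$-affine geodesic $\gamma_{P,Q_{0}}$, and $\dot{\boldsymbol{\xi}}_{c}^{*}(0)$ is the $\xi^{*}$-representation of the arbitrary tangent vector $\dot{c}(0)\in T_{Q_{0}}\mathcal{Q}$; by the mixed-frame identity the condition becomes $\langle\dot{\gamma}_{P,Q_{0}}(1),v\rangle_{g}=0$ for all $v\in T_{Q_{0}}\mathcal{Q}$, i.e.\ $\gamma_{P,Q_{0}}\perp\mathcal{Q}$ at $Q_{0}$. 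Hence $P\mapsto Q_{0}$ is an $\xi$-affine projection.

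For the $\xi^{*}$-affine projection I would run the dual computation, minimising $\tilde{f}(Q)\coloneqq D_{\psi}[Q\parallel P]$ over $\mathcal{Q}$. By \prettyref{lem:3.9}, $\tilde{f}(Q)=D_{\psi^{*}}[P\parallel Q]$, and applying \prettyref{lem:3.11} to the Bregman divergence $D_{\psi^{*}}$ of $\psi^{*}$ in the parametrisation $\xi^{*}$ (admissible by the proof of \prettyref{lem:3.9}, using the involutivity $\psi^{**}=\psi$, $\xi^{**}=\xi$ of the Legendre transform) gives $\tilde{f}(Q)=\psi^{*}(\boldsymbol{\xi}_{P}^{*})+\psi(\boldsymbol{\xi}_{Q})-\boldsymbol{\xi}_{P}^{*}\cdot\boldsymbol{\xi}_{Q}$, which is, up to a constant, the restriction to $\mathcal{Q}$ of the strictly convex (by \prettyref{lem:3.8}) function $\boldsymbol{\xi}\mapsto\psi(\boldsymbol{\xi})-\boldsymbol{\xi}_{P}^{*}\cdot\boldsymbol{\xi}$. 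At a minimiser $Q_{1}\in\mathcal{Q}$ the first-order condition, now using $\nabla_{\xi}\psi(\boldsymbol{\xi}_{Q})=\boldsymbol{\xi}_{Q}^{*}$ (equation \ref{eq:def:dualparametrisation:1}), becomes $(\boldsymbol{\xi}_{Q_{1}}^{*}-\boldsymbol{\xi}_{P}^{*})\cdot\dot{\boldsymbol{\xi}}_{c}(0)=0$ for every curve $c$ in $\mathcal{Q}$ through $Q_{1}$; as $\boldsymbol{\xi}_{Q_{1}}^{*}-\boldsymbol{\xi}_{P}^{*}$ is the $\xi^{*}$-velocity at $Q_{1}$ of the $\xi^{*}$-affine geodesic $\gamma_{P,Q_{1}}^{*}$, the mixed-frame identity yields $\langle\dot{\gamma}_{P,Q_{1}}^{*}(1),v\rangle_{g}=0$ for all $v\in T_{Q_{1}}\mathcal{Q}$, so $P\mapsto Q_{1}$ is an $\xi^{*}$-affine projection.

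The step I expect to be the real obstacle is the one flagged as an assumption above: that $f$ (resp.\ $\tilde{f}$) actually attains its infimum on the chosen component of $\mathcal{Q}$, and does not merely approach it on a non-compact component. The strict convexity in the appropriate affine coordinate reduces this to a coercivity condition — that the sublevel sets of $f$ (resp.\ $\tilde{f}$) intersect $\mathcal{Q}$ in compact sets — which holds, for example, whenever $\mathcal{Q}$ is closed in $\mathcal{P}$, since then a minimising sequence confined to a sublevel set subconverges in $\mathcal{Q}$. I would state this hypothesis explicitly, consistently with the fact that the geodesic projection was itself already defined by a minimum rather than an infimum; modulo it, the content of the lemma is precisely the two first-order calculations above.
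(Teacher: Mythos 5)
Your proof is essentially correct, but it takes a genuinely different route from the paper's. The paper argues by contradiction: it assumes that for some $P$ no point $Q\in\mathcal{Q}$ has $\gamma_{P,Q}\perp\mathcal{Q}$, and then appeals to the mean value theorem to conclude that $\mathcal{Q}$ could not be differentiable, contradicting smoothness; the dual case is handled mutatis mutandis. You instead argue variationally: you realise each candidate foot point as a minimiser of the appropriately oriented divergence, compute the first-order optimality condition in the mixed $(\xi,\xi^{*})$ representation of \prettyref{lem:3.11}, and read off the orthogonality of the corresponding affine geodesic from the mixed-frame pairing already used in the proof of \prettyref{thm:Pythagorean-theorem}. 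Your version buys two things the paper's does not: an explicit, checkable computation in place of an appeal to the mean value theorem (which, as stated in the paper, does not actually produce the claimed non-differentiability --- a smooth function on a non-compact submanifold can fail to have any critical point without $\mathcal{Q}$ being singular), and an honest identification of the real gap, namely that the infimum of the divergence over a non-compact component of $\mathcal{Q}$ need not be attained. That gap is present in the paper's proof as well, only unacknowledged there; your proposal to add a closedness/coercivity hypothesis is the right repair and is consistent with the way the geodesic projection is defined by a minimum.

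Two cautions. First, your assignment of labels may be the transpose of the paper's: in \prettyref{thm:Projection-theorem} the paper pairs the minimisation of $D_{\psi}[P\parallel\cdot]$ with the \emph{$\xi^{*}$}-affine projection, whereas your computation from \prettyref{eq:divergence_bregman_mixed} pairs it with the \emph{$\xi$}-affine geodesic. This traces to an internal inconsistency in the paper: the displayed definition \prettyref{eq:def:Bregman-divergence:1} (gradient taken at the first argument) is not compatible with the mixed representation \prettyref{eq:divergence_bregman_mixed}, and the two conventions produce opposite pairings. Since the lemma asserts the existence of \emph{both} projections, your proof establishes the statement either way, but you should flag which convention you are computing in. Second, your appeal to $\psi^{**}=\psi$ and $\xi^{**}=\xi$ for the dual computation is used but not proved anywhere in the paper; it is standard for convex $\psi$ with invertible gradient, but deserves a sentence.
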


\begin{proof}
Since $(X,\,\mathcal{P}_{\xi},\,D_{\psi})$ is Riemannian statistical
manifold by convention $\xi$ is an affine parametrisation and therefore
by definition any $P,\,Q\in\mathcal{P}$ are connected by a \emph{$\xi$-}affine
geodesic $\gamma_{P,Q}:[0,\,1]\to(X,\,\mathcal{P})$ with $\gamma_{P,Q}(0)=P$
and $\gamma_{P,Q}(1)=Q$. Let's assume, that for a given $P\in\mathcal{P}$
there is no $Q\in\mathcal{Q}$, such that $\gamma_{P,Q}\bot\mathcal{Q}$,
then due to the \emph{mean value theorem} $\mathcal{Q}$ is not differentiable
with regard to the affine parametrisation $\xi$ and since $\xi$
is a homeomorphism $\mathcal{Q}$ is also not differentiable in $\mathcal{P}$.
However since $\mathcal{Q}$ is a smooth submanifold this does not
hold, such that there exists a $Q\in\mathcal{Q}$ with $\gamma_{P,Q}\bot\mathcal{Q}$.
The argument is true for any $P\in\mathcal{P}$ and therefore proves
the existence of an \emph{$\xi$-}affine projection. Since $(X,\,\mathcal{P}_{\xi},\,D_{\psi})$
is dually flat also $\xi^{*}$ is an affine parametrisation. Then
the argument, given for the \emph{$\xi$-}affine projection mutatis
mutandis proves the existence of an \emph{$\xi^{*}$-}affine projection
is argument may analogous be applied to the dual space, and the also
proves the existence of a dual affine projection.
\end{proof}
\begin{thm}[\emph{Amari Projection theorem}]
\label{thm:Projection-theorem}\emph{} Let $(X,\,\mathcal{P}_{\xi},\,D_{\psi})$
be a dually flat statistical manifold and $(X,\,\mathcal{Q})$ a smooth
submanifold. Then the geodesic projection $\pi:\mathcal{P}\to\mathcal{Q}$
is an \emph{$\xi^{*}$}-affine projection and the dual geodesic projection
$\pi^{*}:\mathcal{P}\to\mathcal{Q}$ is an $\xi$-affine projection.
\begin{figure}[h]
\begin{centering}
\def\svgwidth{\columnwidth} 
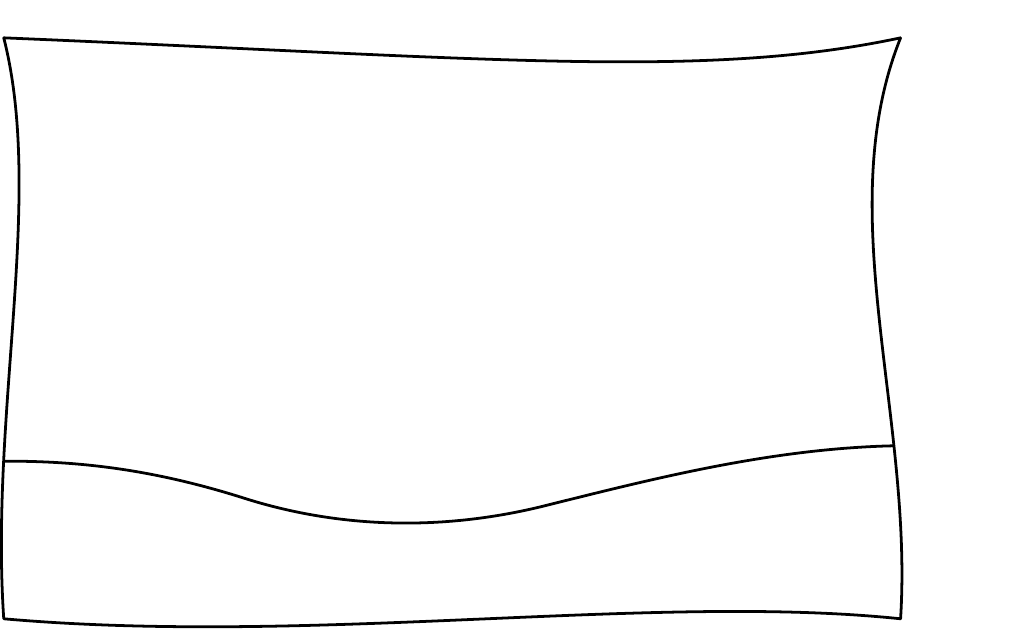
\par\end{centering}
\caption{\label{fig:thm:Projection-theorem}Projection theorem for dually flat
manifolds}
\end{figure}
\end{thm}

\begin{proof}
Let $P\in\mathcal{P}$, then due to \prettyref{lem:3.12} a dual affine
projection $\pi_{\xi^{*}}^{\perp}$ may be chosen, such that the dual
affine curve from $P_{\xi^{*}}$ to $\pi_{\xi^{*}}^{\perp}(P)$ is
orthogonal to $\mathcal{Q}_{\xi^{*}}$. Let $Q=\pi_{\xi^{*}}^{\perp}(P)$.
Then for any sufficiently close $R=Q+\mathrm{d}Q\in\mathcal{Q}$ with
$\boldsymbol{\xi}_{R}=\boldsymbol{\xi}_{\hat{Q}}+\mathrm{d}\boldsymbol{\xi}$
and $\mathrm{d}\boldsymbol{\xi}\ne0$ the triangle $P,\,Q,\,R\in\mathcal{P}$
is orthogonal in $\mathcal{Q}$ and \prettyref{thm:Pythagorean-theorem}
gives the relation $D[P\parallel R]>D[P\parallel Q]$. This shows,
that $Q$ is a critical point w.r.t. the divergence $D[P\parallel Q]$.
Conversely since $\mathcal{Q}$ is a smooth submanifold the mean value
theorem shows that for any critical point $Q\in\mathcal{Q}$, w.r.t.
the divergence $D[P\parallel Q]$ a dual affine projection from $P$
to $Q$ exists and therefore in particular for the points $\hat{Q}\in\mathcal{Q}$
that minimizes the divergence. From equation \ref{eq:manifold:metric:geodesic_vs_divergence}
we obtain for the distance that $D[P\parallel\hat{Q}]\leq d(P,\,\hat{Q})$.
Furthermore by definition $d(P,\,\hat{Q})$ is the minimal length
of a curve from $P$ to $\hat{Q}$, but since there exists a dual
affine projection from $P$ to $\hat{Q}$, which has the length $D[P\parallel\hat{Q}]$
it follows that $d(P,\,\hat{Q})=D[P\parallel\hat{Q}]$ and therefore
the geodesic projection is a dual affine projection. By applying the
dual version of \prettyref{thm:Pythagorean-theorem} this argument
mutatis mutandis also holds for the dual geodesic projection w.r.t.
the affine projection.
\end{proof}
\begin{cor}
\label{cor:3.3}Let $(X,\,\mathcal{P}_{\xi},\,D_{\psi})$ be a dually
flat statistical manifold and $(X,\,\mathcal{Q})$ and $(X,\,\mathcal{S})$
smooth submanifolds. Let further be $(X,\,\mathcal{Q})$ flat w.r.t.
$D_{\psi^{*}}$ and $(X,\,\mathcal{S})$ flat w.r.t. $D_{\psi}$.
Then the geodesic projection $\pi:\mathcal{P}\to\mathcal{Q}$ is uniquely
given by an \emph{$\xi^{*}$}-affine projection and the dual geodesic
projection $\pi^{*}:\mathcal{P}\to\mathcal{S}$ is uniquely given
by an \emph{$\xi$}-affine projection. 
\begin{figure}[h]
\begin{centering}
\def\svgwidth{\columnwidth} 
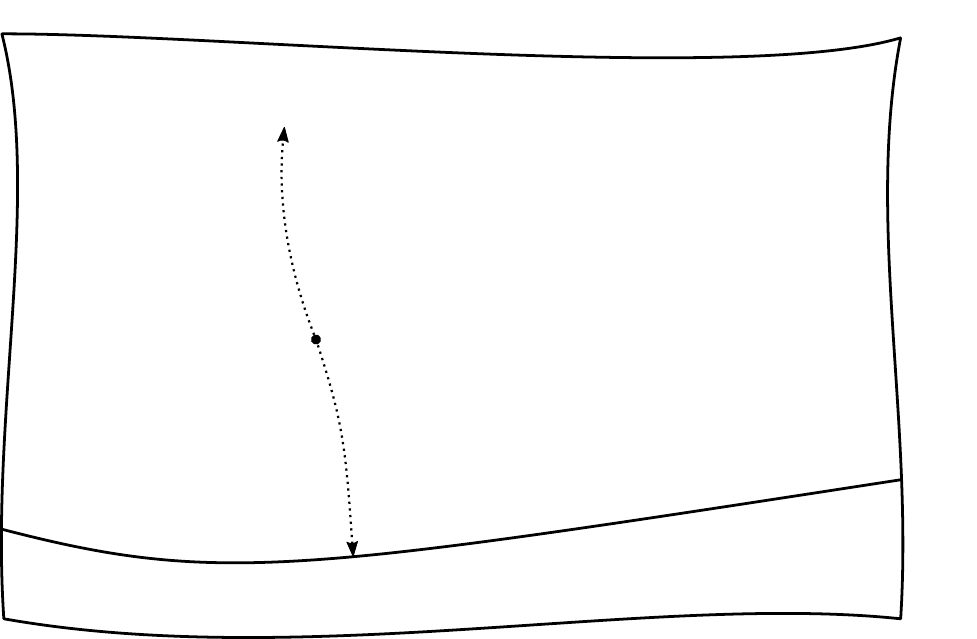
\par\end{centering}
\caption{\label{fig:cor:3.2}Unique projections in dually flat manifolds}
\end{figure}
\end{cor}

\begin{proof}
By virtue of \prettyref{thm:Projection-theorem} it suffices to proof
the uniqueness of the affine projection and the dual affine projection.
Let $p\in\mathcal{P}$, then \prettyref{lem:3.12} asserts the existence
of a dual affine projection of $P$ to a point $\pi(P)=\hat{Q}\in\mathcal{Q}$
and since $(X,\,\mathcal{Q})$ is flat it follows, that $\mathcal{Q}\subseteq T_{Q}\mathcal{Q}$
such that for any $R\in\mathcal{Q}$ \prettyref{thm:Pythagorean-theorem}
shows that:
\[
D[P\parallel R]=D[P\parallel Q]+D[Q\parallel R]\geq D[P\parallel Q]
\]
Therefore $Q$ is the global minimum and $\pi(P)$ is unique. By the
application of the dual version of \prettyref{thm:Pythagorean-theorem}
to the submanifold $(X,\,\mathcal{S})$ the argument mutatis mutandis
also proves, that $\pi^{*}(S)=\hat{S}\in\mathcal{S}$ is unique in
$(X,\,\mathcal{S})$.
\end{proof}

\bibliographystyle{unsrt}
\bibliography{articles}

\end{document}